\newcommand{\R}{\mathbb{R}}     
\renewcommand{\P}{\mathbf{P}}   
\newcommand{\E}{\mathbf{E}}     
\newcommand{\var}{\operatorname{Var}}   
\newcommand{\cov}{\operatorname{Cov}}   
\newcommand{\cF}{\mathcal{F}}
\newcommand{\cX}{\mathcal{X}}
\newcommand{\cN}{\mathcal{N}}
\newcommand{\cW}{\mathcal{W}}
\newcommand{\one}{\mathbbm{1}}  
\newcommand{\pto}{\stackrel{p}{\to}}
\newcommand{\wto}{\Rightarrow}
\newcommand{\sumin}{\sum_{i=1}^n}   
\newcommand{\sol}{\begin{proof}[Solution.]}
\newcommand{\esol}{\end{proof}}
\newcommand{\pf}{\begin{proof}}
\newcommand{\epf}{\end{proof}}
\theoremstyle{plain}
\newtheorem{corollary}{Corollary}
\newtheorem{lemma}{Lemma}
\newtheorem{proposition}{Proposition}
\newtheorem{assumption}{Assumption}
\theoremstyle{definition}
\newtheorem{definition}{Definition}
\newcommand\hl[1]{%
  \bgroup
  \hskip0pt\color{red}%
  #1%
  \egroup
}
\renewcommand{\hl}{}
\newcommand{\avgin}{\frac{1}{n}\sumin}
\newcommand{\limn}{\lim_{n\to\infty}}
\newcommand{\wass}{d_{\cW}}
\newcommand{\rhomax}{\rho_{\text{max}}}
\title{Central limit theorems via Stein's method for randomized experiments under interference\thanks{The author thanks David Choi, Fredrik S{\"a}vje, Johan Ugander, and seminar participants at Yale University and Stanford University for helpful comments and suggestions.  This work was supported in part by NSF grant IIS-1657104.}}
\author{
    Alex Chin\thanks{Department of Statistics, Stanford University, Stanford, CA, 94305 USA (\texttt{ajchin@stanford.edu})}
}
\date{This version: \today}
\begin{document}
\maketitle

\begin{abstract}
We study conditions under which treatment effect estimators constructed under the no-interference assumption in randomized experiments are asymptotically normal in the presence of interference.  We prove that the standard Horvitz-Thompson estimator is asymptotically normal under a restricted interference condition characterized by limiting the degree of the dependency graph.  The amount of interference is allowed to grow with the population size.  We then provide a central limit theorem for the difference-in-means estimator that can handle interference that exists between all pairs of units, provided most of the interference is \hl{captured by a restricted-degree dependency graph}.  The asymptotic variance admits a decomposition into two terms: (a) the variance that is expected under no-interference and (b) the additional variance contributed by interference.  \hl{We propose a conservative variance estimator based on this variance decomposition.}  The results arise as an application of Stein's method.  For practitioners, our results show that standard estimators continue to exhibit normality in large sample sizes and that inference can be made robust to mild forms of interference. 
\\[\baselineskip]
\noindent\textbf{Keywords:} causal inference, dependency graph, normal approximation, SUTVA
\end{abstract}

\section{Introduction}
In randomized experiments it is standard to assume that units do not interfere with each other~\citep{cox1958planning}.  Such an assumption of no-interference is also known as \emph{individualistic treatment response}~\citep{manski2013identification} and is a key part of the \emph{stable unit treatment value assumption} (SUTVA)~\citep{rubin1974estimating,rubin1980randomization}.  However, in many social, medical, and online settings it is common that the no-interference assumption fails to hold~\citep{rubin1990comment,rosenbaum2007interference,hudgens2008toward,walker2014design,aral2016networked,taylor2017randomized}.

There has been a wealth of recent research into methods for handling interference in randomized experiments.  In some cases it is possible to make reasonable structural assumptions about the nature of interference.  The most well-studied such assumption, known as \emph{partial interference}, is the case in which individuals can be partitioned naturally into groups, like households or schools, such that interference may exist arbitrarily between individuals within the same group but not between individuals of different groups~\citep{sobel2006randomized,hudgens2008toward}.  Partial interference is often paired with an additional exchangeability assumption known as \emph{stratified interference}, which assumes that the potential outcomes are only a function of the number of within-group treated individuals and not the identity of those individuals.  In this setting, sizable contributions have been made regarding how best to use two-stage randomized designs or random saturation designs to estimate a variety of direct and indirect effects and, more generally, dose-response curves~\citep{vanderweele2011effect,tchetgen2012causal,liu2014large,baird2016optimal,basse2017exact}.

The case of \emph{general} or \emph{arbitrary interference} is more difficult.  Generally, researchers proceed by proposing a set of \emph{exposure conditions} that inform the interference pattern~\citep{manski2013identification}.  For example, one might assume that interference is local in nature. This idea is the basis for local interference assumptions such as the \emph{neighborhood treatment response} assumption, which assumes that the potential outcomes of unit $i$ are constant conditional on all treatments in a local neighborhood of $i$~\citep{ugander2013graph,eckles2017design}. \citet{aronow2017estimating} provide unbiased estimators and randomization-based inference under the assumption that the exposure model is correctly specified. \citet{choi2017estimation} studies the case where the treatment effects are monotone, and \citet{sussman2017elements} develop unbiased estimators under neighborhood interference response for various parametric assumptions.  In \emph{graph cluster randomization}, the researcher attempts to reduce bias by using a clustered experimental design in which the clusters have been determined using a graph clustering algorithm designed to minimize edge cuts in a suitably chosen graph~\citep{ugander2013graph,eckles2017design,pouget-abadie17,saveski2017detecting}.

In online settings, in which a standard experimental platform has been operationalized, two-stage or clustered designs may be edge use cases and so may require significant engineering effort to set up.  Such experiments also sacrifice statistical power if it turns out the interference was weak or non-existent.  Therefore, it is of interest to practitioners to be able to tell when controlling for interference is necessary, and when it is appropriate to use standard estimators constructed under the no-interference assumption.  This is especially pertinent in the case of general interference, when there may be no clearly observable structures in the data to indicate whether interference is present.  One option is to develop hypothesis tests for testing for spillover or interference effects~\citep{aronow2012general,athey2017exact,basse2017exact}.  Another study that attempts to move the literature in this direction, and the one that is most relevant to the present work, is that of~\citet{savje2017average}.  In that paper, the authors develop a framework for studying the behavior of standard estimators under a weak form of interference.  They characterize interference based on the notion of a \emph{interference dependence graph}, which defines an edge between two units $i$ and $j$ if there exists some unit $k$ (which is possibly $i$ or $j$) whose treatment affects the responses of both $i$ and $j$.  They establish consistency results for various estimators and experimental designs under the restriction that the average degree of the interference dependence graph grows at rate $o(n)$.

Beyond consistency, it is desirable to know whether estimators satisfy a central limit theorem so that valid asymptotic inference can be performed.  This paper makes two contributions toward this goal.  First, we demonstrate that the interference dependence graph of~\citet{savje2017average} is equivalent to the \emph{dependency graph} introduced by~\citet{chen1975poisson}, used in a variant of Stein's method for bounding distances between random variables.  We show that in a Bernoulli randomized experiment, a central limit theorem exists for the Horvitz-Thompson version of the difference-in-means estimator if one is willing to constrain the maximal degree of the dependency graph at rate $o(n^{1/4})$, rather than the average degree at rate $o(n)$.

In practice the dependency graph may be quite dense, and may even have edges between every single pair of units in the population.  As an example of how this may occur, consider the time-dynamic model studied in \citet{eckles2017design} for experiments conducted on a social network.  In this model, similar in spirit to the linear-in-means model of~\citet{manski1993identification} for capturing endogenous social effects, individuals observe the responses of other individuals and use that information to inform their actions in the following time period.  Interference thus spreads through the network over time and, provided the network is connected, eventually creates long-range dependencies between all pairs of nodes.   Therefore any local model of interference, such as the neighborhood treatment response condition, does not apply.  Our second contribution is to propose a notion of \emph{approximate local interference} that allows for long-range dependencies.  We use a more general form of Stein's method to prove a central limit theorem in a setting where all units may interfere with all other units, \hl{but ``strong interference'' is contained to neighborhoods of size $o(n^{1/3})$.} 

We find that the asymptotic variance of the difference-in-means estimator can be decomposed into two pieces: (a) the variance that results from conditioning on the standard potential outcomes $Y_i^{(0)}$ and $Y_i^{(1)}$, which would have been the true variance under SUTVA; and (b) the additional variation of $Y_i^{(0)}$ and $Y_i^{(1)}$ resulting from interference.  If the additional variation due to interference is sufficiently large then standard confidence intervals may be anticonservative.  \hl{The variance decomposition suggests that a conservative variance estimator may be constructed if the additional variation from interference can be estimated; in this paper we show that this is indeed the case under the restricted interference conditions discussed above.  Adding this additional term to a standard SUTVA variance estimator, such as the Neyman conservative variance estimator, yields confidence intervals that are robust to interference.}

Our technical results rely heavily on Stein's method, a flexible family of approaches for bounding distances between functions of random variables.  As such, it can be used for proving central limit theorems when it is difficult or impossible to make stronger assumptions such as independence or the existence of identically distributed random variables.  Stein's method develops from the seminal paper~\citep{stein1972bound}, which provides a bound for the error in the normal approximation of a sum of random variables with a certain dependency structure.  We provide a short summary of the relevant literature here, but for a longer exposition on the historical development of Stein's method we refer the reader to the surveys found in~\citet{ross2011fundamentals} and ~\citet{chatterjee2014short}.

The theory of dependency graphs, as a particular version of Stein's method, was developed in~\citep{chen1975poisson,stein1986approximate,baldi1989normal,chen2004normal}, and is used for establishing limit theorems when dependence is exactly contained within a small neighborhood of variables.  The dependency graph method is also similar in spirit to the idea of $m$-dependence for sequences of random variables; see for example~\citep{hoeffding1948central,berk1973central,romano2000more}. Section 2 of~\citet{chatterjee2014short} summarizes the main idea of the dependency graph approach.

Classical versions of Stein's method have the property that the random variables need to satisfy some ``nice'' condition---in the case of dependency graphs, that the degree is limited.  The papers~\citet{chatterjee2008new,chatterjee2009fluctuations} develop a more general version of Stein's method that~\citet{chatterjee2014short} calls the \emph{generalized perturbative method}.  This approach formalizes the idea that exact independence is really not too different from approximate independence when it comes to establishing limiting distributional results.  Using this technology we are able to show that asymptotic normality still holds when there exists a weak form of long-range dependencies, even if the induced dependency graph is dense.  In short, if units technically share a dependency edge but this dependence is sufficiently weak, then we may view them as essentially independent of each other.

Our results are of primary interest to two audiences.  First, for practitioners, we contribute to a growing characterization in the literature of understanding when interference is a practical concern and when specialized estimators and robust confidence intervals are needed.  Second, for researchers seeking to establish technical results for causal estimators under interference, our work demonstrates how Stein's method can be a useful machinery for handling the complicated dependencies that often appear among statistical objects in interference problems.

The remainder of the paper is organized as follows.  In Section~\ref{sec:setup} we define notation and assumptions.  Section~\ref{sec:depgraph} discusses a central limit theorem framed in the language of dependency graphs, and Section~\ref{sec:approx-interference} provides a central limit theorem that can handle weak, long-range interference.  \hl{In Section~\ref{sec:variance-estimation} we discuss the construction of a convervative variance estimator.}  In Section~\ref{sec:simulations} we provide simulations and Section~\ref{sec:discussion} concludes.  Proofs are provided in the appendix.

\section{Setup}
\label{sec:setup}
We work within the potential outcomes framework, or Rubin causal model~\citep{neyman1923application,rubin1974estimating}.  Consider a population of $n$ units indexed on the set $[n] = \{1, \dots, n\}$ and let $\+W = (W_1, \dots, W_n) \in \{0, 1\}^n$ be a random vector of binary treatments.  For every individual $i$ and realized vector of treatments $\+w \in \{0, 1\}^n$, we posit the existence of a fixed potential outcome $Y_i(\+w)$.  Note that the potential outcomes are functions of the entire treatment vector and not just the treatment of unit $i$.  We make no parametric restrictions on the form of the potential outcomes.  

\hl{It is also helpful to consider an alternative parametrization of the potential outcomes which clarifies the direct and indirect effects.  Let $\+W_{-i}$ denote the vector of $n-1$ elements obtained by removing the $i$-th element from $\+W$, and partition the vector $\+W$ into the \emph{direct} or \emph{ego treatment} $W_i$ and the \emph{indirect treatment} $\+W_{-i}$.  Then we may index the potential outcomes by these two arguments, writing $Y_i(w_i, \+w_{-i})$ instead of $Y_i(\+w)$.

Our estimand of interest will be a version of the direct effect.  In order to formally define this estimand, we first define a random version of the SUTVA potential outcomes.  Let $Y_i^{(0)}$ and $Y_i^{(1)}$ be the random variables defined as follows:
\begin{align}
Y_i^{(0)} &= Y_i^{(0)}(\+W_{-i}) = Y_i(0, \+W_{-i}) \label{eqn:outcome0} \\
Y_i^{(1)} &= Y_i^{(1)}(\+W_{-i}) = Y_i(1, \+W_{-i}) \label{eqn:outcome1}.
\end{align}

For $w = 0, 1$, the quantity $Y_i^{(w)}$ represents the potential outcome under the scenario in which unit $i$ is exposed to the treatment condition $W_i = w$.  We have used this notation because $Y_i^{(w)}$ are the random extension of the SUTVA potential outcomes in the presence of interference, and their values may vary depending on the treatment assignments assigned to the other units.  For the rest of this paper we will suppress the explicit dependence on $\+W_{-i}$, but the reader should keep in mind that randomness in $Y_i^{(w)}$ results entirely from randomness in $\+W_{-i}$.  If the no-interference assumption is true, then conditioning on $W_i$ removes all randomness in $Y_i(\+W)$, and so $Y_i^{(w)}$ are degenerate random variables and hence reduce to the standard (fixed) potential outcomes.  }



The conceptual advantage of viewing the potential outcomes in this way is that we can get a handle on the variation that exists before and after conditioning on the direct effect.  A situation in which such conditioning removes most of the variance can be viewed as a scenario in which ``SUTVA approximately holds,'' even if strictly speaking SUTVA is violated. 

If SUTVA fails to hold, the standard average treatment effect is undefined.  We follow~\cite{savje2017average} and \hl{first define the \emph{assignment-conditional average treatment effect}
\[\tau_{\text{ACATE}}(\+W) = \avgin \left[Y_i(1, \+W_{-i}) - Y_i(0, \+W_{-i})\right]\]
The average effects $\tau_{\text{ACATE}}(\+W)$ are well-defined but uninterpretable and unwieldy; a seperate estimand exists for each assignment vector.  Instead we focus on studying the \emph{expected average treatment effect} (EATE)
\[\tau = \E[\tau_{\text{ACATE}}(\+W)],\]
where the expectation is taken over the experimental design.  The EATE $\tau$ is a natural relaxation of the standard average treatment effect in the sense that they coincide whenever SUTVA holds.  As noted by~\citet{savje2017average}, the EATE is the expected change of changing a random unit's treatment in the current experiment.\footnote{\hl{\citet{savje2017average} also consider another version of a direct effect called the \emph{average distributional shift effect} (ADSE)  In this paper we only consider Bernoulli i.i.d.\ designs, in which case the EATE and the ADSE are the same estimand.  However they are not equal to each other in general.  For a further discussion of estimands under interference, the reader is directed to Section 3 of \citet{savje2017average}.}}}
It may be viewed as an expected direct effect, where the marginalization is taken over the indirect treatment assignments.  

Using definitions \eqref{eqn:outcome0} and \eqref{eqn:outcome1}, we see that $\tau$ can also be written
\begin{equation}
\label{eqn:tau}
\tau = \avgin \E[Y_i^{(1)} - Y_i^{(0)}],
\end{equation} In other words, we marginalize the difference of the random variables $Y_i^{(1)}$ and $Y_i^{(0)}$ both over the finite population of $n$ units and over any randomness that is contributed by the indirect effect.  

Regardless of whether or not the no-interference assumption holds, one of $Y_i^{(0)}$ and $Y_i^{(1)}$ is still unobserved.  Let $Y_i = Y_i(\+W) = W_iY_i^{(1)} + (1 - W_i)Y_i^{(0)}$ denote the observed outcome.  Let $N_1 = \sumin W_i$ and $N_0 = \sumin (1 - W_i)$ denote the within-group sample sizes.  We study the behavior of the difference-in-means estimator
\begin{equation}
\label{eqn:tau-hat}
\hat \tau = \frac{1}{N_1} \sumin W_iY_i - \frac{1}{N_0} \sumin (1 - W_i)Y_i.
\end{equation}
Note that this estimator is well-defined even when SUTVA is violated, as $Y_i$ is simply the observed outcome.  \citet{savje2017average} study a wider class of estimators, namely the design-based Horvitz-Thompson and H\'ajek estimators that are typically used when $\P(W_i = 1)$ varies with $i$~\citep{horvitz1952generalization,hajek1971comment}.  The difference-in-means estimator is a special case of the H\'ajek estimator, coinciding when the assignment probabilities are the same for every unit.  For simplicity of exposition our analysis focuses on the difference-in-means estimator and an experimental design in which the assignment probabilities are constant across units.  We briefly discuss generalizations at the end of this paper.

In order to obtain asymptotic results, we follow the standard finite population regime~\citep{freedman2008regressionA,freedman2008regressionB,lin2013agnostic,abadie2017sampling,savje2017average} in which we have access to a sequence of finite populations indexed by size $n$.  Each population is comprised of its own treatments and outcomes and $W_i$ and $Y_i$ now represent triangular arrays of random variables.  We shall largely keep the indexing on $n$ implicit to avoid notational clutter, except when charification is helpful, such as for sequences of dependency graphs.  The only randomness within each population is induced by the treatment assignment vector $\+W$. 
Our goal is to study the limiting behavior of the sequence $\hat \tau - \tau$, subject to appropriate scaling.

Throughout this paper we will make use of the following regularity conditions.  The first two conditions, probabilistic assignment and uniformly bounded fourth moments, are standard regularity conditions for asymptotic analysis of regression estimators of treatment effects.  
\hl{\begin{assumption}[Bernoulli design and probabilistic assignment]
\label{asm:design}
$\P(W_i = 1) = \pi$ independently, where the treatment proportion $\pi$ is bounded away from $0$ and $1$.
\end{assumption}}
Assumption~\ref{asm:design} can be relaxed to allow different assignment probabilities per unit, $\P(W_i = 1) = \pi_i$, at the cost of more complicated calculations.
\begin{assumption}[Bounded fourth moments]
\label{asm:bounded-moments}
$\E[|Y_i|^k]$ are uniformly bounded by a constant for all $i, n$ and all $k \leq 4$.
\end{assumption}
\hl{Because we work only with Bernoulli randomized experiments as specified by Assumption~\ref{asm:design}, Assumption~\ref{asm:bounded-moments} is equivalent to a uniform bound placed on the potential outcomes $|Y_i(\+w)^k|$ for all $i, n, \+w$ and $k \leq 4$.  This equivalence does not hold for arbitrary designs, and we state Assumption~\ref{asm:bounded-moments} in the manner above so as to mimic the form of \citet{savje2017average}'s Assumption 1B.}

We also assume existence of the following limits of the potential outcome moments.  
\begin{assumption}[Existence of limits]
\label{asm:variance-limits}
Let $\bar Y^{(1)} = n^{-1} \sumin Y_i^{(1)}$ and $\bar Y^{(0)} = n^{-1} \sumin Y_i^{(0)}$.  The following limits exist:
\begin{align}
\sigma_1^2 &:= \limn \E\left[\avgin (Y_i^{(1)} - \bar Y^{(1)})^2\right] \nonumber\\
\sigma_0^2 &:= \limn \E\left[\avgin (Y_i^{(0)} - \bar Y^{(0)})^2\right] \nonumber\\
\sigma_{01} &:= \limn \E\left[\avgin(Y_i^{(1)} - \bar Y^{(1)})(Y_i^{(0)} - \bar Y^{(0)})\right] \nonumber \\
\sigma_\tau^2 &:= \limn n\var\left[\bar Y^{(1)} - \bar Y^{(0)}\right].\label{eqn:sigma-tau}
\end{align}
\end{assumption}
The quantities inside the expectation are ``population'' quantities in the sense that they do not involve the treatment assignment.  Because of interference they may be random, which is why the expectation is needed.  A consequential implication of the assumption that $\sigma_\tau^2$ exists (equation~\eqref{eqn:sigma-tau}) is that the population difference of means $\bar Y^{(1)} - \bar Y^{(0)}$ is consistent at a $n^{1/2}$ rate of convergence.  It is possible that limiting results are still achievable even when the difference of means converges at a slower rate, but we do not address this case in this paper.

\section{A dependency graph central limit theorem}
\label{sec:depgraph}
In order for central limit theorems to exist for $\hat \tau - \tau$, the observed outcomes $Y_i$ need to be ``sufficiently independent.''  One way to enforce this constraint is to directly require that enough pairs of units are completely independent.  This idea is formalized via the following definition. 
\begin{definition}
\label{def:dep-graph}
Let $\{X_i\}_{i=1}^n$ be a collection of random variables on the nodes $[n]$ of a graph $D$.  Then $D$ is a \emph{dependency graph} if for any two disjoint sets of nodes $A, B \subset [n]$ such that no edge in $D$ crosses between $A$ and $B$, the sets $\{X_i\}_{i \in A}$ and $\{X_i\}_{i \in B}$ are independent.
\end{definition}
The method of dependency graphs is a classical way of characterizing dependence in collections of random variables; see for example~\citet{baldi1989normal}.  Dependency graphs are not necessarily unique; the complete graph always satisfies Definition~\ref{def:dep-graph}, for example.  In this paper we work with the dependency graph that is minimal in the sense that it has the fewest number of edges satisfying the definition.  

In order to characterize interference between units, we consider dependency graphs on the collection of observed outcomes.  Let $D$ denote the dependency graph on the set of random variables $\{Y_i\}_{i=1}^n$.  \hl{In this case we see that the minimal dependency graph corresponds exactly to the notion of interference dependence considered in~\citet{savje2017average}, via the edge definition in Definition 5 of that paper.  They define the interference dependence graph to have edges
\[D_{ij} = \begin{cases}
1 &\text{if } I_{\ell i}I_{\ell j} \text{ for some } \ell \in [n], \\
0 &\text{otherwise,}
\end{cases}\]
where 
\[I_{ij} = \begin{cases}
1 &\text{if } Y_j(\+w) \neq Y_j(\+w')\text{ for some } \+w, \+w' \text{ such that } \+w_{-i} = \+w_{-i}', \\
1 &\text{if } i = j, \\
0 &\text{otherwise.}
\end{cases}
\]
In other words, because the outcomes are defined as functions of the treatment vector, units $i$ and $j$ are connected in this minimal dependency graph if and only if (a) the treatment of $i$ affects the response of $j$, (b) the treatment of $j$ affects the response of $i$, or (c) the responses of both $i$ and $j$ are affected by the treatment of some third unit.  

Importantly, the dependency graph is \emph{not} the same as the social network or other network structure in which the units may be posited to interact.  The dependency graph simply captures the structure of interference and does not specify the source of that interference.  Furthermore, if the interference is actually induced by a social network $G$, then the dependency graph also depends on the process giving rise to interference.  For example, if the outcome-generating process is such that only neighboring units interfere with each other, then $D$ does have the same edge structure as $G$.  But if interference results from a contagion process spreading over the entire network, then $D$ may be fully-connected even if $G$ is sparse.  Throughout this paper, we use $D$ to denote the dependency graph and reserve $G$ to denote a social network, when we need to refer to it.}

Given a dependency graph defined on a collection of random variables, we can take advantage of bounds from the literature on Stein's method.  Such bounds characterize the Wasserstein distance between sums of random variables and a Gaussian random variable.  Recall that the Wasserstein metric between probability measures $\mu$ and $\nu$ is
\[\wass(\mu, \nu) = \sup\left\{ \left|\int h(x) d\mu(x) - \int h(x) d\nu(x)\right| : h \text{ is } 1\text{-Lipschitz}\right\},\]
where a function $h$ is $1$-Lipschitz if it satisfies $|h(x) - h(y)| \leq |x - y|$.
In this paper we are concerned only with controlling the Wasserstein distance between $\mu$ and a standard 
Gaussian random variable.  For any random variable $S$, denote the distance to Gaussianity as
\[\wass(S) = d_{\cW}(\mu, \nu),\]
where $\mu$ is the law of $S$ and $\nu$ is the law of a standard Gaussian random variable, having density
\[\frac{1}{\sqrt{2\pi}} e^{-x^2/2}.\]
We rely on the following dependency graph bound, which we state as a lemma.
\begin{lemma}[\cite{ross2011fundamentals}, Theorem 3.6]
\label{lem:dep-graph-bound}
Let $X_1, \dots, X_n$ be a collection of random variables such that $\E[X_i^4] < \infty$ and $\E[X_i] = 0$.  Let $\sigma^2 = \var(\sum_i X_i)$ and $S = \sum_i X_i$.  Let $d$ be the maximal degree of the dependency graph of $(X_1, \dots, X_n)$.  Then for constants $C_1$ and $C_2$ which do not depend on $n$, $d$ or $\sigma^2$,
\begin{equation}
\label{eqn:dep-graph-bound}
\wass(S / \sigma) \leq C_1\frac{d^{3/2}}{\sigma^2} \bigg(\sumin \E[X_i^4]\bigg)^{1/2} + C_2 \frac{d^2}{\sigma^3} \sumin \E|X_i|^3.
\end{equation}
\end{lemma}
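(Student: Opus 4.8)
The plan is to derive \eqref{eqn:dep-graph-bound} by the classical dependency-graph version of Stein's method; I sketch the main steps, since the full argument appears in \citet{ross2011fundamentals}. Fix a $1$-Lipschitz test function $h$ and let $f$ solve the Stein equation $f'(w) - w f(w) = h(w) - \E[h(Z)]$ with $Z$ standard normal, recalling the standard bounds $\norm{f'}_\infty \le \sqrt{2/\pi}$ and $\norm{f''}_\infty \le 2$ (up to absolute constants, these are all that will be used). Writing $W = S/\sigma$, it then suffices to bound $\abs{\E[f'(W) - W f(W)]}$ and take the supremum over such $h$.

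The key move is localization. For each $i$ let $N_i$ be the closed neighborhood of $i$ in the dependency graph, set $S_i = \sum_{j \in N_i} X_j$, and put $W_i = W - S_i/\sigma = \sigma^{-1}\sum_{j \notin N_i} X_j$. Definition~\ref{def:dep-graph} applied with $A = \{i\}$ and $B = [n] \setminus N_i$ shows $X_i \indep W_i$, so $\E[X_i f(W_i)] = \E[X_i]\,\E[f(W_i)] = 0$ and hence $\E[W f(W)] = \sigma^{-1} \sumin \E[X_i (f(W) - f(W_i))]$. A second-order Taylor expansion of $f(W_i)$ about $W$, together with the identity $\E[\sumin X_i S_i] = \sum_{i,j} \Cov(X_i, X_j) = \var(S) = \sigma^2$ (the cross terms over non-edges vanish by independence), gives
\begin{equation*}
\E[f'(W) - W f(W)] = \frac{1}{\sigma^2}\, \E\!\Big[ f'(W)\Big(\sigma^2 - \sumin X_i S_i\Big)\Big] + \frac{1}{2\sigma^3} \sumin \E\big[X_i S_i^2 f''(\xi_i)\big]
\end{equation*}
for suitable intermediate points $\xi_i$. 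It remains to estimate the two terms. The second is at most $\norm{f''}_\infty (2\sigma^3)^{-1} \sumin \E[\abs{X_i} S_i^2]$; using $S_i^2 \le \abs{N_i} \sum_{j \in N_i} X_j^2$ and $\E[\abs{X_i} X_j^2] \le \tfrac13 \E\abs{X_i}^3 + \tfrac23 \E\abs{X_j}^3$ (Hölder plus AM--GM), this collapses to $O\big(d^2 \sigma^{-3} \sumin \E\abs{X_i}^3\big)$. For the first term, Cauchy--Schwarz bounds it by $\norm{f'}_\infty\, \sigma^{-2}\, \big(\Var(\sumin X_i S_i)\big)^{1/2}$; expanding $\sumin X_i S_i$ as a sum of $O(nd)$ products $X_i X_j$ over closed edges, each such product is independent of all but $O(d^2)$ of the others, and bounding each surviving covariance by an average of fourth moments via AM--GM gives $\Var(\sumin X_i S_i) = O\big(d^3 \sumin \E X_i^4\big)$, so the first term is $O\big(d^{3/2} \sigma^{-2} (\sumin \E X_i^4)^{1/2}\big)$. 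Combining the two estimates and absorbing $\norm{f'}_\infty$ and $\norm{f''}_\infty$ into absolute constants $C_1, C_2$ yields \eqref{eqn:dep-graph-bound}.

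I expect the only genuinely delicate point to be the combinatorial accounting in the variance estimate: one must carefully verify that a fixed product term $X_i X_j$ (with $j$ in the closed neighborhood of $i$) is correlated with only $O(d^2)$ of the other product terms, so that $\Var(\sumin X_i S_i)$ scales like $d^3 \sumin \E X_i^4$ rather than a larger power of $d$ — this is precisely what produces the exponent $3/2$ on $d$ in the first term of the bound. The remaining ingredients (the Stein-equation regularity bounds, the independence consequences of Definition~\ref{def:dep-graph}, the Taylor expansions, and the moment bookkeeping via Hölder and AM--GM) are routine.
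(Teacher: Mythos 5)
Your sketch is correct: it reproduces the standard dependency-graph Stein's method argument (Stein equation bounds, localization via $W_i = W - S_i/\sigma$ with $S_i$ summed over the closed neighborhood, second-order Taylor expansion, the identity $\E[\sum_i X_i S_i] = \sigma^2$, and the $O(d^3 \sum_i \E X_i^4)$ count for $\Var(\sum_i X_i S_i)$ that yields the exponent $3/2$). Note, however, that the paper offers no proof of this statement at all --- it is imported verbatim as Theorem 3.6 of \citet{ross2011fundamentals} --- so what you have written is essentially the proof from the cited source rather than an alternative to anything in the paper.
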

From here, we see that one can define an appropriate choice of $X_i$ such that $S$ is the desired treatment effect estimator, and then provide conditions so that the right-hand side converges to zero.  However, a caveat is that the summand in the difference-in-means estimator $\hat \tau$ depends on the random sample sizes $N_0$ and $N_1$, which  depend on the treatment assignments of all $n$ units.
Therefore, the dependency graph on $\{X_i\}_{i=1}^n$ unfortunately is complete (fully connected), even if the dependency graph on $\{Y_{i}\}_{i=1}^n$ is not, and Lemma~\ref{lem:dep-graph-bound} is not applicable.

As a result, in this section we restrict ourselves to studying a modified form of the difference-in-means estimator, defined by
\begin{equation}
\label{eqn:ht}
\tilde \tau = \sumin \left[\frac{W_{i}Y_{i}}{n\pi} - \frac{(1 - W_{i})Y_{i}}{n(1 - \pi)}\right].
\end{equation}
The estimator $\tilde \tau$ is a Horvitz-Thompson~\citep{horvitz1952generalization} variant of the difference-in-means estimator $\hat \tau$, and uses the population sample sizes $n\pi$ and $n(1 - \pi)$ in the denominator in place of the empirical sample sizes $N_1$ and $N_0$.  Though there is little advantage to using $\tilde \tau$ over $\hat \tau$ in practice, it is still instructive for seeing how the dependency graph method works.  Results are provided for the difference-in-means estimator $\hat \tau$ in Section~\ref{sec:approx-interference}.

We now define a limited interference condition that constrains the structure of the dependency graph.  The metric that we use to measure the extent of interference for a collection of random variables is the maximal degree of the dependency graph.  Let $D_n$ denote the sequence of dependency graphs and $d_n$ denote the corresponding maximal degrees.  We make the following interference assumption about the limiting behavior of $d_n$.  
\begin{assumption}[Local interference]
\label{asm:max-deg}
$d_n = o(n^{1/4})$.
\end{assumption}
This assumption is a local interference assumption in the sense that it requires all interference for a given unit to come from a small number of other units.  \hl{This assumption would hold, if for example, units on a social network $G$ only interfered with neighboring units, and $G$ itself had maximal degree $o(n^{1/4})$.}  For comparison, consider the restricted interference assumption~\citep[Assumption~2] {savje2017average}, which requires the average degree of the dependency graph to be of order $o(n)$.  Our Assumption~\ref{asm:max-deg} is stronger, but it still allows the amount of interference to grow with $n$.  By restricting the maximal degree rather than the average degree, we can apply Lemma~\ref{lem:dep-graph-bound}.

Under the notion of local dependence defined in Assumption~\ref{asm:max-deg}, we obtain the following asymptotic normality result for the Horvitz-Thompson estimator.
\begin{restatable}{theorem}{thmdepgraph}
\label{thm:clt-depgraph}
Let $\tau$ and $\tilde \tau$ be defined as in equations~\eqref{eqn:tau} and \eqref{eqn:ht}.  Under regularity conditions (Assumptions \ref{asm:design}-\ref{asm:variance-limits}) and the restricted dependency degree condition (Assumption~\ref{asm:max-deg}), $\sqrt{n}(\tilde \tau - \tau)$ is asymptotically Gaussian:
\[\sqrt{n}(\tilde \tau - \tau) \wto N(0, \sigma^2),\]
where 
\[\sigma^2 = \limn n \var(\tilde \tau).\]
\end{restatable}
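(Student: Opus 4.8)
The plan is to cast $\sqrt n(\tilde\tau - \tau)$ as a sum of mean-zero summands whose dependency graph is exactly the interference dependence graph $D_n$, and then invoke the dependency-graph bound of Lemma~\ref{lem:dep-graph-bound}. Write $T_i = \frac{W_iY_i}{n\pi} - \frac{(1-W_i)Y_i}{n(1-\pi)}$, so $\tilde\tau = \sumin T_i$, and set $X_i = \sqrt n\,(T_i - \E[T_i])$ and $S = \sumin X_i$. The first step is to check that $\tilde\tau$ is \emph{exactly} unbiased for $\tau$: under the Bernoulli design (Assumption~\ref{asm:design}) the treatment $W_i$ is independent of $\+W_{-i}$, hence of $Y_i^{(0)}$ and $Y_i^{(1)}$; since $W_iY_i = W_iY_i^{(1)}$ and $(1-W_i)Y_i = (1-W_i)Y_i^{(0)}$, this gives $\E[T_i] = \frac1n(\E[Y_i^{(1)}] - \E[Y_i^{(0)}])$ and therefore $\E[\tilde\tau] = \tau$ by \eqref{eqn:tau}. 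Hence $S = \sqrt n(\tilde\tau - \tau)$ and $\E[X_i] = 0$, so no bias term survives the $\sqrt n$ rescaling.

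The second step identifies the dependency graph of $(X_1,\dots,X_n)$. Each $X_i$ is a fixed function of $W_i$ and of $Y_i = Y_i(\+W)$, hence a function of $\{W_\ell : I_{\ell i} = 1\}$ (a set that contains $i$ itself, by the $i = j$ clause in the definition of $I$). Consequently, if $A, B \subset [n]$ are disjoint and no edge of $D_n$ crosses between them, then the treatment coordinates driving $\{X_i\}_{i\in A}$ and those driving $\{X_i\}_{i\in B}$ are disjoint, and the independence of the $W_\ell$'s under Assumption~\ref{asm:design} yields $\{X_i\}_{i\in A} \indep \{X_i\}_{i\in B}$. Thus $D_n$ --- equivalently, the minimal dependency graph of $\{Y_i\}$ --- is a valid dependency graph for $(X_1,\dots,X_n)$, with maximal degree $d_n$.

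The third step is the moment bookkeeping and the application of the bound. Since $\pi$ is bounded away from $0$ and $1$, we have $|T_i| \le |Y_i|/(n\min(\pi, 1-\pi))$, so Assumption~\ref{asm:bounded-moments} gives $\E|X_i|^3 = n^{3/2}\,\E|T_i - \E T_i|^3 = O(n^{-3/2})$ and $\E X_i^4 = n^2\,\E(T_i - \E T_i)^4 = O(n^{-2})$, uniformly in $i$; summing over the $n$ units, $\big(\sumin \E X_i^4\big)^{1/2} = O(n^{-1/2})$ and $\sumin \E|X_i|^3 = O(n^{-1/2})$. Let $\sigma_n^2 = \var(S) = n\var(\tilde\tau)$; under Assumption~\ref{asm:variance-limits} (together with a direct variance computation) this converges to the constant $\sigma^2$ of the theorem statement. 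If $\sigma^2 = 0$ then $\sqrt n(\tilde\tau - \tau) \to 0$ in $L^2$ and the conclusion is immediate, so assume $\sigma^2 > 0$; then $\sigma_n^2 \ge \sigma^2/2$ for all large $n$, and Lemma~\ref{lem:dep-graph-bound} gives
\[\wass(S/\sigma_n) \;\le\; C_1'\,\frac{d_n^{3/2}}{n^{1/2}} \;+\; C_2'\,\frac{d_n^{2}}{n^{1/2}}\]
for constants $C_1', C_2'$ depending only on $\sigma^2$, $\pi$, and the moment bounds. Under the local interference condition $d_n = o(n^{1/4})$ (Assumption~\ref{asm:max-deg}) both terms vanish --- the $d_n^2$ term being the binding one, which is exactly what pins the exponent $1/4$ --- so $\wass(S/\sigma_n) \to 0$ and hence $S/\sigma_n \wto N(0,1)$. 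Since $\sigma_n \to \sigma$, Slutsky's theorem gives $S = \sigma_n\cdot(S/\sigma_n) \wto N(0,\sigma^2)$, as claimed.

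The calculations in the last step are routine; the point requiring the most care is the second step --- transferring the minimal dependency graph of the observed outcomes to the estimator summands --- together with the exact unbiasedness of $\tilde\tau$, both of which hinge on the independence of treatments under the Bernoulli design. (This is precisely why this section works with the Horvitz--Thompson variant $\tilde\tau$ rather than $\hat\tau$: the empirical sample sizes $N_0, N_1$ in $\hat\tau$ depend on all $n$ treatments and would make the relevant dependency graph complete.) One must also treat the degenerate case $\sigma^2 = 0$ separately, since the Stein bound is vacuous unless the normalizing variance is bounded away from zero.
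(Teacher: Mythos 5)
Your proof is correct and follows essentially the same route as the paper's: decompose $\tilde\tau$ into per-unit Horvitz--Thompson summands, verify the moment orders under Assumptions~\ref{asm:design}--\ref{asm:bounded-moments}, and apply Lemma~\ref{lem:dep-graph-bound} with the $o(n^{1/4})$ degree bound. You actually fill in several details the paper leaves implicit --- exact unbiasedness of $\tilde\tau$, the transfer of the dependency graph from $\{Y_i\}$ to the summands $\{X_i\}$, and the degenerate case $\sigma^2 = 0$ --- all of which are sound.
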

We arrive at Theorem~\ref{thm:clt-depgraph} by defining an appropriate choice for $X_i$ and evaluating the variance $\sigma^2$, which allows us to control the bound in Lemma~\ref{lem:dep-graph-bound}.  The full proof is provided in the appendix.

A curious feature of Stein's method is that it allows one to make statements about the asymptotic behavior of random objects without calculating an explicit expression for the variance.  Because our primary interest is not in the Horvitz-Thompson estimator $\tilde \tau$, we skip calculating the limiting variance $\sigma^2$,
but is not hard to express it in terms of the moments defined in Assumption~\ref{asm:variance-limits}.  For the difference-in-means estimator in Section~\ref{sec:approx-interference} we provide an explicit characterization of the limiting variance.

\section{A central limit theorem for approximate local interference}
\label{sec:approx-interference}

There are two drawbacks of relying on the dependency graph approach for studying treatment effect estimators.  It does not allow us to study estimators like the difference-in-means estimator that use empirical sample sizes, and it requires exact local interference (Assumption~\ref{asm:max-deg}).  In this section we discuss how these issues can be overcome.  Rather than require most pairs of nodes to be exactly independent, we only require approximate independence, which allows long-range interference as long as it is not too strong.

\hl{It is worth explaining how this approximate independence assumption might arise in practice.  Suppose we measure a social network, $G$, the edges of which capture the peer interactions which we believe transmit the interference mechanism.  If we believe the neighborhood exposure assumptions invoked by, e.g., \citet{ugander2013graph,forastiere2016identification,sussman2017elements,jagadeesan2017designs}, and others, then the dependency graph methods of Section~\ref{sec:depgraph} are sufficient.  However, exact local interference is insufficient for describing more complex data generating processes.  A social contagion process, such as that considered by~\cite{eckles2017design}, leads to a fully-connected dependency graph $D$ even if the social graph $G$ is sparse (but connected).  This discrepancy between $D$ and $G$ was previously discussed in Section~\ref{sec:depgraph} and may be discouraging to practitioners.

However, if peer effects dissipate over the network, we may believe that interference from long-distance units in $G$ may be second- or lower-order effects.
One may conceptualize the existence of two different dependency graphs defined on the collection of units, one capturing strong interference and the other capturing weak interference.  Here we provide a result that allows the weak interference graph to be arbitrarily dense, but requires $o(n^{1/3})$ sparsity in the strong interference graph.  In this case, sparsity of the social graph $G$ would be sufficient for the limiting results to hold.  Such sparsity has been demonstrated empirically on such social networks as Facebook~\citep{ugander2011anatomy}, MSN~\citep{leskovec2008planetary}, and a mobile phone newtork~\citep{onnela2007structure}, and suggested theoretically by Dunbar's number, a suggested sociocognitive limit in the number of possible stable social relationships~\citep{dunbar1992neocortex}.

The primary assumption is provided in Assumption~\ref{asm:approx-local}, but we require a technical detour to describe the main approach, developed by~\citet{chatterjee2008new}.}  Let $\+X = (X_1, \dots, X_n) \in \cX^n$ be a random vector of independent random variables on a measure space $\cX$ and let $f: \cX^n \to \R$ be a scalar-valued measurable function.  The objective is to bound the distance to normality of $S:= f(\+X)$.  To do so, we characterize the behavior of $f$ when $\+X$ is ``perturbed'' by replacing some components of $\+X$ by independent copies.  Let $\+X' = (X_1', \dots, X_n')$ denote an independent copy of $\+X$.  For every $A\in [n]$ let $\+X^A$ be the vector where the entries corresponding to $A$ are replaced by elements of $\+X'$, defined componentwise as
\[X_i^A = \begin{cases}
X_i' &\text{if }i \in A \\
X_i &\text{if }i \not \in A
\end{cases}.\]
Now define
\begin{align*}
\Delta_i f = f(\+X) - f(\+X^i), &\qquad i \in [n], \\
\Delta_i f^A = f(\+X^A) - f(\+X^{A \cup i}), &\qquad A \subset [n], i \not \in A,
\end{align*}
where we have made a notational simplification by writing $\+X^i$ instead of $\+X^{\{i\}}$ and $\+X^{A \cup i}$ instead of $\+X^{A \cup \{i\}}$.
The quantities $\Delta_i f$ and $\Delta_i f^A$ can be viewed as discrete derivatives, because they measure the change in the function $f$ in response to perturbations of $\+X$.  If perturbations in $\+X$ act upon $f$ mostly independently by coordinate, then we expect the resulting value $f(\+X)$ to be approximately normal.  We can now state the following normal approximation theorem, which is the main result in~\cite{chatterjee2008new}.  We state it as a lemma.
\begin{lemma}[\cite{chatterjee2008new}, Theorem 2.2]
Let $\+X = (X_1, \dots, X_n)$ be a vector of independent real-valued random variables, and let $S = f(\+X)$.  Suppose $\E[S] = 0$ and $\sigma^2 := \E[S^2] < \infty$.  Define
\[T = \frac{1}{2} \sum_{i=1}^n \sum_{A \subset [n] \setminus \{i\}} \frac{\Delta_i f \Delta_i f^A}{n\binom{n-1}{|A|}} \]
Then $\E T = \sigma^2$ and
\[
\wass(S/\sigma) \leq \frac{1}{\sigma^2}[\var(\E(T | S))]^{1/2} + \frac{1}{2\sigma^3} \sumin \E[|\Delta_i f|^3].
\]
\label{lem:chatterjee}
\end{lemma}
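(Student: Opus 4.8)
The plan is to prove this by Stein's method for normal approximation. It suffices to bound $\abs{\E[h(S/\sigma)] - \E[h(Z)]}$ over all $1$-Lipschitz $h$, where $Z \sim N(0,1)$; taking the supremum then controls $\wass(S/\sigma)$. Let $\phi$ be the bounded solution of the Stein equation $\phi'(x) - x\phi(x) = h(x) - \E[h(Z)]$, for which the classical estimates $\norm{\phi'}_\infty \le \sqrt{2/\pi} \le 1$ and $\norm{\phi''}_\infty \le 2$ hold (see, e.g., \citet{ross2011fundamentals}). Then $\E[h(S/\sigma)] - \E[h(Z)] = \E[\phi'(S/\sigma)] - \E[(S/\sigma)\phi(S/\sigma)]$, so everything reduces to bounding this difference uniformly over such $\phi$.

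\textbf{The key identity.} The main work is to show that for every $g$ with bounded second derivative,
\[
\E[S\,g(S)] = \E[T\,g'(S)] + R , \qquad \abs{R} \le \tfrac14\norm{g''}_\infty\sumin\E\abs{\Delta_i f}^3 .
\]
Let $\+X'$ be the independent copy of $\+X$ used to build the $\Delta$'s and set $S' = f(\+X')$. Since $S \indep S'$ and $\E S' = 0$, we have $\E[S\,g(S)] = \E[(S - S')\,g(S)]$. Telescoping $S - S' = f(\+X) - f(\+X')$ by resampling coordinates one at a time in a uniformly random order, then averaging over that order, rewrites this as $\sum_i\sum_{A\subset[n]\setminus\{i\}}\frac{1}{n\binom{n-1}{|A|}}\,\E[\Delta_i f^A\,g(S)]$, i.e.\ with the same combinatorial weights that appear in $T$. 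The key step is the antisymmetry identity $\E[\Delta_i f^A\,(g(S) + g(S^i))] = 0$, where $S^i := f(\+X^i) = S - \Delta_i f$; it follows by conditioning on all coordinates except $X_i$ and its resampled copy and noting that, conditionally, $\Delta_i f^A$ is an antisymmetric function of the exchangeable pair $(X_i, X_i')$ whereas $g(S) + g(S^i)$ is symmetric. Hence $\E[\Delta_i f^A\,g(S)] = \tfrac12\,\E[\Delta_i f^A\,(g(S) - g(S^i))]$, and the first-order expansion $g(S) - g(S^i) = \Delta_i f\,g'(S) + O(\norm{g''}_\infty (\Delta_i f)^2)$ produces the main term $\tfrac12\,\E[\Delta_i f^A\,\Delta_i f\,g'(S)]$; summing with the weights reassembles $\E[T\,g'(S)]$. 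For the remainder, the distributional identity $\Delta_i f^A \equaldist \Delta_i f$ (both are $f$ minus $f$ with coordinate $i$ independently resampled) together with H\"older gives $\E[\abs{\Delta_i f^A}(\Delta_i f)^2] \le \E\abs{\Delta_i f}^3$, and $\sum_{A\subset[n]\setminus\{i\}}\frac{1}{n\binom{n-1}{|A|}} = 1$ yields the stated bound on $R$. Taking $g$ to be the identity map gives $\E[S^2] = \E[T]$, i.e.\ $\E T = \sigma^2$.

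\textbf{Assembling the bound.} Apply the identity with $g(x) = \phi(x/\sigma)$: then $\norm{g''}_\infty \le 2\sigma^{-2}$, $g'(S) = \sigma^{-1}\phi'(S/\sigma)$, and hence $\E[(S/\sigma)\phi(S/\sigma)] = \sigma^{-2}\E[T\,\phi'(S/\sigma)] + \sigma^{-1}R$ with $\sigma^{-1}\abs{R} \le \tfrac{1}{2\sigma^3}\sumin\E\abs{\Delta_i f}^3$. Therefore
\[
\E[\phi'(S/\sigma)] - \E[(S/\sigma)\phi(S/\sigma)] = \frac{1}{\sigma^2}\,\E\big[(\sigma^2 - T)\,\phi'(S/\sigma)\big] - \sigma^{-1}R .
\]
Since $\phi'(S/\sigma)$ is a function of $S$, we may replace $T$ by $\E[T\mid S]$; and because $\E[\sigma^2 - \E(T\mid S)] = \sigma^2 - \E T = 0$, the first term equals $-\sigma^{-2}\cov\big(\phi'(S/\sigma),\,\E(T\mid S)\big)$, which by Cauchy--Schwarz is at most $\sigma^{-2}[\var(\phi'(S/\sigma))]^{1/2}[\var(\E(T\mid S))]^{1/2} \le \sigma^{-2}[\var(\E(T\mid S))]^{1/2}$, using $\var(\phi'(S/\sigma)) \le \norm{\phi'}_\infty^2 \le 1$. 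Combining the two contributions and taking the supremum over $h$ gives exactly the claimed inequality.

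\textbf{Main obstacle.} The delicate part is the key identity: checking the antisymmetry cancellation $\E[\Delta_i f^A(g(S) + g(S^i))] = 0$ and, above all, tracking the combinatorial weights through the random-order telescoping so that the surviving first-order terms assemble into precisely $\tfrac12\sum_{i,A}\frac{\Delta_i f\,\Delta_i f^A}{n\binom{n-1}{|A|}}\,g'(S) = T\,g'(S)$, with the correct factor $\tfrac12$ and normalization $\binom{n-1}{|A|}$. The identity $\Delta_i f^A \equaldist \Delta_i f$ is what keeps the error term from depending on $A$ and lets it collapse to $\sumin\E\abs{\Delta_i f}^3$. The remaining ingredients---the Stein-equation regularity bounds, the conditioning step, and the final Cauchy--Schwarz---are routine.
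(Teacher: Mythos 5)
The paper does not prove this lemma; it is imported verbatim from \citet{chatterjee2008new} (Theorem 2.2), so there is no internal proof to compare against. Your argument is a correct and complete reconstruction of Chatterjee's original proof: the random-order telescoping of $S-S'$ produces exactly the weights $1/\bigl(n\binom{n-1}{|A|}\bigr)$, the exchangeable-pair antisymmetry $\E[\Delta_i f^A(g(S)+g(S^i))]=0$ yields the factor $\tfrac12$, the Taylor remainder combined with $\Delta_i f^A \equaldist \Delta_i f$, H\"older, and $\sum_{A}1/\bigl(n\binom{n-1}{|A|}\bigr)=1$ gives the third-moment error term, and the Stein-equation bounds $\norm{\phi'}_\infty\le 1$, $\norm{\phi''}_\infty\le 2$ together with the conditioning-on-$S$ Cauchy--Schwarz step assemble the stated inequality with the correct constants. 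No gaps.
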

It is more convenient to study a version of Lemma~\ref{lem:chatterjee} that characterizes the Wasserstein distance in terms of local dependencies.  This corollary is essentially a variant of Corollary 3.2 in~\citet{chatterjee2014short}.
\begin{restatable}{corollary}{corchatterjee}
\label{cor:chatterjee}
Let all variables be defined as in Lemma~\ref{lem:chatterjee}.  For every $i, j$, let $c_{i,j}$ be a constant such that for all $A \in [n] \setminus \{i\}$ and $B \in [n] \setminus \{j\}$,
\[\cov(\Delta_i f \Delta_i f^A, \Delta_j f \Delta_j f^B) \leq c_{i,j}.\]
Then
\begin{equation}
\label{eqn:wass-perturbative}
\wass(S/\sigma) \leq \frac{1}{2\sigma^2} \bigg(\sum_{i,j=1}^n c_{i,j}\bigg)^{1/2} + \frac{1}{2\sigma^3} \sumin \E[|\Delta_i f|^3].
\end{equation}
\end{restatable}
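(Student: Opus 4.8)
The plan is to derive Corollary~\ref{cor:chatterjee} from Lemma~\ref{lem:chatterjee} by bounding the term $\var(\E(T \mid S))$ in terms of the covariance constants $c_{i,j}$. The second term in~\eqref{eqn:wass-perturbative} is already exactly the second term in Lemma~\ref{lem:chatterjee}, so no work is needed there; the entire task is to show
\[\var(\E(T \mid S)) \leq \frac{1}{4}\sum_{i,j=1}^n c_{i,j}.\]

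First I would use the elementary fact that conditioning is a contraction in $L^2$, i.e. $\var(\E(T \mid S)) \leq \var(T)$. This removes the conditioning and reduces the problem to bounding the unconditional variance of $T$. Then I would write $T = \sum_{i=1}^n T_i$ where $T_i = \tfrac{1}{2}\sum_{A \subset [n]\setminus\{i\}} \frac{\Delta_i f\, \Delta_i f^A}{n\binom{n-1}{|A|}}$, so that $\var(T) = \sum_{i,j} \cov(T_i, T_j)$. The next step is to expand each $\cov(T_i, T_j)$ by bilinearity of covariance over the sums defining $T_i$ and $T_j$: this produces a double sum over $A \subset [n]\setminus\{i\}$ and $B \subset [n]\setminus\{j\}$ of terms $\cov(\Delta_i f\, \Delta_i f^A,\ \Delta_j f\, \Delta_j f^B)$, each divided by $4n^2\binom{n-1}{|A|}\binom{n-1}{|B|}$. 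By hypothesis each such covariance is at most $c_{i,j}$, so I bound the inner double sum above by $c_{i,j}$ times $\frac{1}{4n^2}\sum_{A}\sum_{B}\frac{1}{\binom{n-1}{|A|}\binom{n-1}{|B|}}$.

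The one genuine computation is evaluating $\sum_{A \subset [n]\setminus\{i\}} \frac{1}{\binom{n-1}{|A|}}$: grouping subsets by cardinality $a$ gives $\sum_{a=0}^{n-1}\binom{n-1}{a}\cdot\frac{1}{\binom{n-1}{a}} = n$. Hence $\frac{1}{4n^2}\sum_A\sum_B \frac{1}{\binom{n-1}{|A|}\binom{n-1}{|B|}} = \frac{1}{4n^2}\cdot n \cdot n = \frac{1}{4}$, so $\cov(T_i,T_j) \leq \frac{1}{4}c_{i,j}$ and summing over $i,j$ yields $\var(T) \leq \frac{1}{4}\sum_{i,j} c_{i,j}$. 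Plugging $\var(\E(T\mid S)) \leq \var(T) \leq \frac14\sum_{i,j}c_{i,j}$ into Lemma~\ref{lem:chatterjee} gives the first term $\frac{1}{2\sigma^2}(\sum_{i,j}c_{i,j})^{1/2}$, completing the proof.

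The main subtlety — more a point to be careful about than an obstacle — is that the hypothesis bounds the covariance only by the single constant $c_{i,j}$ uniformly over all $A$ and $B$, rather than giving a per-$(A,B)$ bound that could be summed more tightly; this is exactly why the normalizing weights $1/\binom{n-1}{|A|}$ are engineered so that they sum to $n$, making the crude uniform bound collapse cleanly to $\frac14\sum_{i,j}c_{i,j}$. One should also note that covariances can be negative, so strictly the bound $\cov(\Delta_i f\,\Delta_i f^A, \Delta_j f\,\Delta_j f^B) \leq c_{i,j}$ is used as an upper bound on each term and the weights are nonnegative, which is all that is needed; there is no need for absolute values, matching the statement as written. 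Finally, I would remark that $\sigma^2 < \infty$ (hence $\E[S^2]<\infty$) together with the stated assumptions guarantees all these moments and covariances are finite, so the manipulations are justified.
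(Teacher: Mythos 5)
Your proposal is correct and follows essentially the same route as the paper: bound $\var(\E(T\mid S))$ by $\var(T)$, expand $\var(T)$ bilinearly over the double sum defining $T$, apply the uniform bound $c_{i,j}$, and use the fact that the weights $1/\binom{n-1}{|A|}$ sum to $n$ over subsets $A \subset [n]\setminus\{i\}$ so the whole expression collapses to $\frac{1}{4}\sum_{i,j}c_{i,j}$. Your write-up is in fact slightly more careful than the paper's (which has a typographical $\binom{n}{|A|}$ where $\binom{n-1}{|A|}$ is meant), so nothing further is needed.
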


One may gain intuition for Corollary~\ref{cor:chatterjee} by considering the case of a dependency graph, in which an upper bound can be provided for the number of covariance terms $c_{i,j}$ that can be nonzero.  \hl{Let $D$ be a graph with maximal degree $d_n$ and let $\cN_i$ denote the neighborhood of unit $i$.  Letting $\+X$ be a collection of independent random variables, as in Lemma~\ref{lem:chatterjee}, consider a function of the form $S = f(\+X) = \sum_{i=1}^n g_i(X_i, \+X_{\cN_i})$, where $\+X_{\cN_i}$ are the elements of $\+X$ restricted to $\cN_i$.  Notice now that $D$ is a dependency graph for the collection of variables $\{g_i(X_i, \+X_{\cN_i})\}_{i=1}^n$.  Then the discrete derivative has the form $\Delta_i f = \sum_{r \in \cN_i} \Delta_i X_r$, where $\Delta_i X_r = X_r - X_r^i$ is the effect on unit $r$ of perturbing unit $i$.  Notice that the sum in the discrete derivative is only over the $r$ units in $\cN_i$, because the only arguments of $g_i$ are elements of $\cN_i$.  Now consider the covariance between the discrete derivatives for units $i$ and $j$, which can be calculated as
\[\cov(\Delta_i f, \Delta_j f) = \sum_{r \in \cN_i} \sum_{s \in \cN_j} \cov(\Delta_i X_r, \Delta_j X_s) = \sum_{r \in \cN_i \cap \cN_j} \cov(\Delta_i X_r, \Delta_j X_r) \leq Cd_n \one(|\cN_i \cap \cN_j| > 0),\]
where $C$ is a constant that does not depend on $n$ or $d_n$.  

In other words, the covariance is always of order $d_n$, but is exactly zero whenever the neighborhoods of $i$ and $j$ do not intersect.  Now, for every unit $i$, the number of units $j$ such that $|\cN_i \cap \cN_j| > 0$ is at most $d_n^2$.  Therefore the total number of covariances that can be nonzero is $nd_n^2$, and so the total magnitude of those covariances is $Cnd_n^3$.  Assuming the variance $\sigma^2$ is of order $n$, and note that $\Delta_i f^A$ is of order at most $d_n$.  Then the right-hand side of equation~\ref{eqn:wass-perturbative} is of the form
\[\frac{C_1}{n} (nd_n^3)^{1/2} + \frac{C_2}{n^{3/2}} nd_n^3 = C_1 \frac{d_n^{3/2}}{n^{1/2}} + C_2 \frac{d_n^3}{n^{1/2}}.\]
This quantity can be made small in the limit if $d_n$ grows sufficiently slowly.  The first term here and the first term in the dependency graph bound~\eqref{eqn:dep-graph-bound} both require a $d_n = o(n^{1/3})$ constraint; the second term here requires a $d_n = o(n^{1/6})$ constraint whereas the second term of equation~\eqref{eqn:dep-graph-bound} requires $d_n = o(n^{1/4})$.}


We return now to the problem of obtaining a limiting result for $\hat \tau$.  Define the sequence of functions
\[f_n(\+W) = \sqrt{n} (\hat \tau - \tau) = \sqrt{n} \sumin \left[\frac{W_{i}}{N_1} - \frac{1 - W_{i}}{N_0}\right]Y_{i}(\+W) - \sqrt{n} \tau.\]
Since the treatment vector $\+W$ is comprised of independent Bernoulli$(\pi)$ random variables and is the sole source of randomness in $f_n$, Corollary~\ref{cor:chatterjee} is applicable provided we define appropriate constraints on the behavior of $f_n$ under perturbations of the treatment vector.

Let $W_{i}'$ denote an independent copy of $W_{i}$ and let $\+W^i$ the resulting treatment vector obtained by swapping out $W_{i}$ for $W_{i}'$ in $\+W$, defined componentwise as
\[W_{j}^i = \begin{cases} 
W_{j} &\text{if } j \neq i \\
W_{j}' &\text{if } j = i
\end{cases}.
\]
Let 
\[Y_{j}^i = Y_{r}(\+W^i)\]
denote the resulting response of unit $r$ when $i$ is perturbed, and define
\[\Delta_i Y_{r} = Y_{r} - Y_{r}^i\]
to be the change in $Y_{r}$ when $W_i$ is replaced with an independent copy.  Furthermore, let 
\begin{align*}
N_1' &= N_1 + W_{i}' - W_{i} \\
N_0' &= n - N_1'
\end{align*}
denote the adjusted sample sizes.

The following lemma, which we state without proof, characterizes the discrete derivative $\Delta_i f_n$.
\begin{lemma}
\label{lem:form-of-derivative}
Let $W_{i}'$, $N_1'$, $N_0'$, and $Y_{r}^i$ be defined as above.
Then for every $i \in [n]$, the discrete derivative can be written as
\begin{equation}
\label{eqn:discrete-deriv}
\Delta_i f_n = \sqrt{n} \bigg(A_{i} + \sum_{r \neq i} B_{i,r}\bigg),
\end{equation}
where
\begin{align*}
A_{i} 
&= \left[\frac{W_i}{N_1} - \frac{W_i'}{N_1'}\right]Y_i^{(1)} - \left[\frac{1 - W_i}{N_0} - \frac{1 - W_i'}{N_0'}\right]Y_i^{(0)} \\
B_{i,r} 
&= \left[\frac{W_r}{N_1}Y_r - \frac{W_r}{N_1'}Y_r^i\right] - \left[\frac{1 - W_r}{N_0} Y_r - \frac{1 - W_r}{N_0'}Y_r^i\right].
\end{align*}
\end{lemma}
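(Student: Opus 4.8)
The plan is to expand $\Delta_i f_n = f_n(\+W) - f_n(\+W^i)$ directly from the definitions and simplify term by term. First observe that $f_n$ contains the deterministic shift $\sqrt{n}\,\tau$, which is identical in $f_n(\+W)$ and $f_n(\+W^i)$ and therefore cancels, leaving $\Delta_i f_n = \sqrt{n}\bigl(\hat\tau(\+W) - \hat\tau(\+W^i)\bigr)$, where I write $\hat\tau(\cdot)$ for the difference-in-means estimator \eqref{eqn:tau-hat} regarded as a function of the treatment vector. The structural point that drives everything is that passing from $\+W$ to $\+W^i$ changes only the $i$th coordinate and leaves the indirect treatment vector $\+W_{-i}$ untouched. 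Hence the random SUTVA outcomes $Y_i^{(0)}$ and $Y_i^{(1)}$, which by \eqref{eqn:outcome0}--\eqref{eqn:outcome1} depend on $\+W_{-i}$ alone, are \emph{unchanged} by the perturbation, whereas for $r \neq i$ the observed response becomes $Y_r(\+W^i) = Y_r^i$, which may differ from $Y_r$, and the group sizes become $N_1' = N_1 + W_i' - W_i$ and $N_0' = n - N_1'$.

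Next I would split the sum defining $\hat\tau$ into the ego term $r = i$ and the remaining terms with $r \neq i$. For the ego term I use $Y_i(\+W) = W_i Y_i^{(1)} + (1 - W_i) Y_i^{(0)}$ together with the idempotence identities $W_i^2 = W_i$ and $W_i(1 - W_i) = 0$ to get $W_i Y_i(\+W) = W_i Y_i^{(1)}$ and $(1 - W_i) Y_i(\+W) = (1 - W_i) Y_i^{(0)}$; applying the same simplification to the perturbed vector (with $W_i'$ in place of $W_i$, but the same $Y_i^{(w)}$) and subtracting yields precisely the claimed formula for $A_i$. For each $r \neq i$ the treatment $W_r$ is shared by $\+W$ and $\+W^i$, so the only differences between the original and perturbed $r$th summands are $N_1 \mapsto N_1'$, $N_0 \mapsto N_0'$, and $Y_r \mapsto Y_r^i$; grouping the treated and control contributions gives the stated form of $B_{i,r}$. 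Adding up, $\Delta_i f_n = \sqrt{n}\bigl(A_i + \sum_{r \neq i} B_{i,r}\bigr)$.

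There is no deep obstacle here; the argument is essentially algebraic bookkeeping. The step that deserves the most care is the invariance claim: one must pass through the $(w_i, \+W_{-i})$ parametrization of the potential outcomes to see that perturbing $W_i$ does not touch $Y_i^{(0)}$ or $Y_i^{(1)}$, since writing $Y_i(\+W)$ naively makes it tempting to think otherwise. A minor point also worth checking explicitly is that $N_1' = N_1 + W_i' - W_i$ really is the treated-group size of $\+W^i$, which is immediate because only coordinate $i$ of the treatment vector is altered. With these observations in place the calculation is routine.
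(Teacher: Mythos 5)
Your argument is correct: the paper states Lemma~\ref{lem:form-of-derivative} without proof, and your direct expansion of $f_n(\+W) - f_n(\+W^i)$ --- cancelling the $\sqrt{n}\,\tau$ shift, isolating the ego term via $W_i Y_i = W_i Y_i^{(1)}$ and $(1-W_i)Y_i = (1-W_i)Y_i^{(0)}$, and noting that $Y_i^{(0)}, Y_i^{(1)}$ depend only on $\+W_{-i}$ and hence are invariant under the perturbation --- is exactly the intended bookkeeping. The two points you flag as needing care (the invariance of the SUTVA outcomes and the identity $N_1' = N_1 + W_i' - W_i$) are indeed the only nontrivial observations, and both are handled correctly.
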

Notice that $A_i$ describes the change for unit $i$ (the direct effect) and $B_{i,r}$ describes the effect that perturbing the treatment of unit $i$ has on the response of unit $r$.

\hl{We now describe assumptions that constrain the behavior of $\Delta_i Y_r$, which in turn allows us to handle $\Delta_i f_n$ via the expression in Lemma~\ref{lem:form-of-derivative}.  Assumption~\ref{asm:approx-regularity} provides a set of weak regularity conditions; the main conceptual condition of approximate local interference is provided by Assumption~\ref{asm:approx-local}. 

\begin{assumption}[Covariance regularity conditions]
\label{asm:approx-regularity}
The following global covariance constraints hold:
\begin{enumerate}[(a)]
\item
\[\sumin \sum_{j=1}^n| \cov(Y_i, Y_j)| = o(n^2).\]
\item 
\[\sumin \sum_{r \neq i} \sum_{j \neq i} |\cov(\Delta_i Y_r, Y_j)| = o(n^2).\]
\item 
\[\sumin \sum_{j=1}^n \sum_{r \neq i} \sum_{\substack{s \neq j \\ s \neq r}} |\cov(\Delta_i Y_r, \Delta_j Y_s)| = o(n^2).\]
\end{enumerate}
\end{assumption}
Part (a) states that the overall responses are not too dependent.  Part (b) states that the effect of perturbing $i$ on $r$ is mostly independent of the behavior of unit $j$.  Part (c) states that the effect of perturbing $i$ on $r$ and the effect of perturbing $j$ on $s$ are mostly independent, when $r$ and $s$ are distinct from $i$ and $j$ and each other.  To see why these assumptions may be reasonable, consider the case where SUTVA holds.  Then, the expressions in part (b) and (c) are also exactly zero.  Finally, $\cov(Y_i, Y_j) = 0$ whenever $i \neq j$ so that the expression in part (a) is $O(n)$.

Now we describe the approximate local interference condition.  For every $n$, let $H_n$ denote a undirected graph on the vertices $[n]$ which represents a dependency graph for ``strong interference,'' with weak interference allowed outside of $H_n$.  Let the neighborhood of unit $i$ on $H_n$ be denoted by $\cN_i^{H_n} = \{j \in [n]: (H_n)_{ij} = 1\}$.  Assumption~\ref{asm:approx-local} formally describes the conditions required of $H_n$.  

\begin{assumption}[Approximate local interference]
\label{asm:approx-local}
There exists a sequence of graphs $\{H_n\}_{n=1}^\infty$ having maximal degree sequence $h_n = o(n^{1/3})$ such that
\[\max\left\{\sum_{r \notin \cN_i^{H_n}} |\Delta_i Y_{r}|, \sum_{r \notin \cN_i^{H_n}} |\Delta_r Y_{i}|\right\} \to 0\]
almost surely for every node $i \in [n]$.
\end{assumption}}

\hl{Assumption~\ref{asm:approx-local} states that outside of the strong interference neighborhoods $\cN_i^{H_n}$, the interference has a magnitude that is vanishing in the limit.  If the quantities $\sum_{r \notin \cN_i^{H_n}} |\Delta_i Y_{r}|$ and $\sum_{r \notin \cN_i^{H_n}} |\Delta_r Y_{i}|$ are equal to zero exactly, then $H_n$ becomes a sparse dependency graph of the sort discussed in Section~\ref{sec:depgraph}.  Notably, the neighborhood size $h_n$ is allowed to grow at rate $o(n^{1/3})$.  Note that the maximum is taken over two terms; the first describes outcomes that can be affected by the treatment of unit $i$ and the second describes treatments that can affect the outcome of unit $i$.\footnote{\hl{It is easy to generalize $H_n$ to directed graphs, which would capture the notion that $Y_i$ may depend on $W_j$ without $Y_j$ depending on $W_i$ and vice versa.  We define $H_n$ as undirected here only for notational and conceptual simplicity.}}}

We now state the main result.  In comparison to Theorem~\ref{thm:clt-depgraph}, it replaces a restriction on the dependency graph, Assumption~\ref{asm:max-deg}, with the approximate local interference requirements, Assumptions~\ref{asm:approx-regularity} and~\ref{asm:approx-local}.  It also is a statement about the difference-in-means estimator $\hat \tau$ rather than the Horvitz-Thompson estimator $\tilde \tau$.
\begin{restatable}{theorem}{thmperturbative}
\label{thm:clt-perturbative}
Let $\tau$ and $\hat \tau$ be defined as in equations~\eqref{eqn:tau} and \eqref{eqn:tau-hat}, and assume that the regularity conditions (Assumptions \ref{asm:design}-\ref{asm:variance-limits} and~\ref{asm:approx-regularity}) hold.  Assume further that the outcome functions is constrained according to Assumption~\ref{asm:approx-local}.  Then $\sqrt{n}(\hat \tau - \tau)$ is asymptotically Gaussian:
\[\sqrt{n}(\hat \tau - \tau) \wto N(0, \sigma^2).\]
The limiting variance $\sigma^2$ has the form
\begin{equation}
\label{eqn:asymptotic-variance}
\sigma^2 := \limn n\var(\hat \tau) = \frac{1 - \pi}{\pi} \sigma_1^2 + \frac{\pi}{1 - \pi} \sigma_0^2 + 2 \sigma_{01} + \sigma_\tau^2,
\end{equation}
where the quantities $\sigma_1^2$, $\sigma_0^2$, $\sigma_{01}$, and $\sigma_\tau^2$ are defined in Assumption~\ref{asm:variance-limits}, and $\pi = \limn \P(W_i = 1)$ is the limiting treatment proportion.
\end{restatable}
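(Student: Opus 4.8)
The plan is to apply the perturbative normal-approximation bound of Corollary~\ref{cor:chatterjee} to $f_n(\+W) = \sqrt n(\hat\tau - \tau)$, whose only source of randomness is the independent Bernoulli vector $\+W$. I would first reduce to a centered statement by writing $\sqrt n(\hat\tau - \tau) = S + \sqrt n(\E\hat\tau - \tau)$ with $S = \sqrt n(\hat\tau - \E\hat\tau)$. The Horvitz--Thompson estimator $\tilde\tau$ of~\eqref{eqn:ht} is exactly unbiased for $\tau$ under Assumption~\ref{asm:design} (because $W_i \indep \+W_{-i}$), and a second-order Taylor expansion of $1/N_1$ and $1/N_0$ around $1/(n\pi)$ and $1/(n(1-\pi))$ shows $\E[\hat\tau - \tilde\tau] = O(h_n/n) = o(n^{-1/2})$; this uses that $\cov(W_j, Y_i^{(1)})$ is non-negligible for at most $O(h_n)$ indices $j$ per $i$, which follows from Assumption~\ref{asm:approx-local} after upgrading its almost-sure statement to a moment bound via the boundedness of the outcomes (Assumption~\ref{asm:bounded-moments}). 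So the bias term vanishes, and once the variance computation below shows $\sigma_n^2 := \E[S^2] = n\Var(\hat\tau) \to \sigma^2$ with $\sigma^2$ as in~\eqref{eqn:asymptotic-variance}, the theorem follows by Slutsky's theorem applied to the conclusion of Corollary~\ref{cor:chatterjee} (the degenerate case $\sigma^2 = 0$, where the limit is a point mass, is treated separately).

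To bound $\wass(S/\sigma_n)$ I would work from the discrete-derivative formula of Lemma~\ref{lem:form-of-derivative}, $\Delta_i f_n = \sqrt n\bigl(A_i + \sum_{r \neq i} B_{i,r}\bigr)$. The term $A_i$ is supported on the event $\{W_i \neq W_i'\}$ with magnitude $O(1/n)$ there; and $B_{i,r}$ decomposes into an interference piece $\Delta_i Y_r\bigl(\tfrac{W_r}{N_1} - \tfrac{1-W_r}{N_0}\bigr)$, whose weight is $O(1/n)$, plus a ``mechanical'' piece of size $O(1/n^2)$ coming from the shift $N_1 \to N_1'$ and active only when $W_i \neq W_i'$. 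By Assumption~\ref{asm:approx-local}, $\sum_{r \neq i}|\Delta_i Y_r|$ is $O(h_n)$ plus an almost-surely vanishing tail, so $|\Delta_i f_n| = O(h_n/\sqrt n)$. For the covariance term in~\eqref{eqn:wass-perturbative}, I would expand $\cov(\Delta_i f_n\,\Delta_i f_n^A,\ \Delta_j f_n\,\Delta_j f_n^B)$ into sums of products of covariances among the $\{Y_i\}$ and the $\{\Delta_i Y_r\}$, keeping careful track of which of the indices $i, j, r, s$ may coincide; parts (a)--(c) of Assumption~\ref{asm:approx-regularity} are exactly what is needed to conclude $\sum_{i,j} c_{i,j} = o(n^2)$ and hence $\tfrac{1}{2\sigma_n^2}\bigl(\sum_{i,j} c_{i,j}\bigr)^{1/2} = o(1)$.

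The crux is the third-moment term $\tfrac{1}{2\sigma_n^3}\sum_i \E|\Delta_i f_n|^3$: the crude bound $|\Delta_i f_n| = O(h_n/\sqrt n)$ only gives $O(h_n^3/n^{1/2})$, which is too weak when $h_n = o(n^{1/3})$. The key observation is that the weights $\tfrac{W_r}{N_1} - \tfrac{1-W_r}{N_0}$ are centered ($\E[W_r/\pi - (1-W_r)/(1-\pi)] = 0$) and sum exactly to zero over $r$, so $\sum_{r \neq i} B_{i,r}$ behaves like a sum of roughly $h_n$ weakly correlated, approximately mean-zero contributions of size $O(1/n)$ rather than a generic sum of that many $O(1/n)$ terms. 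Making this rigorous --- controlling the cross terms through Assumption~\ref{asm:approx-regularity}(c) --- should yield $\E(\Delta_i f_n)^2 = O(h_n/n)$ and $\E(\Delta_i f_n)^4 = O(h_n^2/n^2)$ in place of the naive $O(h_n^2/n)$ and $O(h_n^4/n^2)$; then Cauchy--Schwarz gives $\E|\Delta_i f_n|^3 \leq \bigl(\E(\Delta_i f_n)^2\,\E(\Delta_i f_n)^4\bigr)^{1/2} = O(h_n^{3/2}/n^{3/2})$ and therefore $\sum_i \E|\Delta_i f_n|^3 = O(h_n^{3/2}/n^{1/2}) = o(1)$. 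I expect this second-moment gain from the centered weights, together with the parallel estimate for the perturbed derivatives $\Delta_i f_n^A$ and the combinatorial book-keeping of coincident indices, to be the main technical obstacle.

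Finally, the explicit form of $\sigma^2$ is obtained from a separate, somewhat involved second-moment calculation of $\lim_n n\Var(\hat\tau)$. One first replaces $N_1$ and $N_0$ by $n\pi$ and $n(1-\pi)$ --- the error being negligible after multiplication by $n$, by the concentration of $N_1/n$ already used in the bias step --- and decomposes $Y_i^{(w)} = \mu_i^{(w)} + \varepsilon_i^{(w)}(\+W_{-i})$ into its mean $\mu_i^{(w)} = \E[Y_i^{(w)}]$ and a mean-zero fluctuation. Expanding $n\Var(\hat\tau)$, the terms that would already be present under SUTVA contribute the usual Bernoulli-design Neyman variance, accounting (after one argues, via the covariance regularity of Assumption~\ref{asm:approx-regularity}, that the population averages $\bar Y^{(w)}$ are $L^2$-concentrated so that the limits of Assumption~\ref{asm:variance-limits} appear) for $\tfrac{1-\pi}{\pi}\sigma_1^2 + \tfrac{\pi}{1-\pi}\sigma_0^2 + 2\sigma_{01}$; the remaining terms, which are nonzero only because of interference --- cross-unit covariances of the $\varepsilon_i^{(w)}$ and the excess fluctuation variance they leave behind --- reassemble, after Assumption~\ref{asm:approx-regularity} is used to discard the long-range contributions, into $n\Var(\bar Y^{(1)} - \bar Y^{(0)})$, whose limit is $\sigma_\tau^2$ by~\eqref{eqn:sigma-tau}. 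Adding the contributions gives~\eqref{eqn:asymptotic-variance}, and in particular establishes $\sigma_n^2 \to \sigma^2$, completing the Slutsky argument.
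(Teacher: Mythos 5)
Your proposal follows the same skeleton as the paper's proof: apply Corollary~\ref{cor:chatterjee} to $f_n(\+W)=\sqrt n(\hat\tau-\tau)$, control the covariance term by expanding the discrete derivative of Lemma~\ref{lem:form-of-derivative} into the $A_i$ and $B_{i,r}$ pieces and invoking parts (a)--(c) of Assumption~\ref{asm:approx-regularity} for the long-range sums and Assumption~\ref{asm:approx-local} for the strong-interference sums (the paper's Lemmas~\ref{lem:elem-bound}--\ref{lem:useful2}), and obtain the variance formula~\eqref{eqn:asymptotic-variance} as a SUTVA part plus a $\sigma_\tau^2$ part. Two execution differences are worth noting. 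First, you compute $\limn n\var(\hat\tau)$ by a direct expansion (replacing $N_1,N_0$ by $n\pi,n(1-\pi)$ and splitting $Y_i^{(w)}$ into mean plus fluctuation), whereas the paper conditions on the $\sigma$-field $\cF$ of equation~\eqref{eqn:sigma-field} and reads off the two pieces from the law of total variance and the known SUTVA formula; the paper's route is shorter and is what makes the decomposition~\eqref{eqn:var-decomp1}--\eqref{eqn:var-decomp2} transparent. Second, you explicitly control the bias $\E\hat\tau-\tau=o(n^{-1/2})$ so that the centering hypothesis $\E[S]=0$ of Lemma~\ref{lem:chatterjee} is met; the paper does not address this step at all, so your addition is a genuine (and needed) supplement rather than a deviation.

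The one place where you diverge substantively is the third-moment term, and there your diagnosis is sharper than the paper's treatment. The paper's Lemma~\ref{lem:delta-bounds} asserts $|\Delta_i f_n|=O_p(n^{-1/2})$ by claiming $\sum_{r\neq i}|\Delta_i Y_r|=O_p(1)$, but Assumption~\ref{asm:approx-local} (and the paper's own Lemma~\ref{lem:restricted-interference}) only deliver $O_p(h_n)$ with $h_n=o(n^{1/3})$, under which the crude bound gives $\sum_i\E|\Delta_i f_n|^3=O(h_n^3 n^{-1/2})$, which need not vanish. You correctly flag this as the crux and propose recovering the loss by exploiting the centering of the weights $W_r/N_1-(1-W_r)/N_0$ to improve $\E(\Delta_i f_n)^2$ to $O(h_n/n)$. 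That refinement is plausible but you leave it as a sketch (``should yield''), and it would require the $\Delta_i Y_r$ to be sufficiently decorrelated from the weights, which does not follow from the stated assumptions without further argument; so your proposal is incomplete at exactly the point where the paper's own argument is weakest. If you only need the result as stated with the crude bounds, note that $h_n=o(n^{1/6})$ suffices; to genuinely reach $o(n^{1/3})$ you (or the paper) must make the second-moment gain rigorous.
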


The asymptotic variance takes the form of a variance decomposition based on conditioning on the ``potential outcomes'' $Y_i^{(0)}$ and $Y_i^{(1)}$ \hl{(the random quantities defined in equations~\eqref{eqn:outcome0} and~\eqref{eqn:outcome1}, not the original fixed potential outcomes $Y_i(\+w)$).}  To see this, denote the $\sigma$-field generated by $Y_i^{(0)}$ and $Y_i^{(1)}$ as
\begin{equation}
\label{eqn:sigma-field}
\cF := \{Y_i^{(w)} : i \in [n],  w \in \{0, 1\}\}.
\end{equation}
Then by the law of total variance,
\[\var(\hat \tau) = \E[\var(\hat \tau | \cF)] + \var[\E(\hat \tau | \cF)].\]
This decomposition is evident in the asymptotic variance $\sigma^2$, as
\begin{equation}
\label{eqn:var-decomp1}
\limn n \E[\var(\hat \tau | \cF)] = \frac{1 - \pi}{\pi} \sigma_1^2 + \frac{\pi}{1 - \pi} \sigma_0^2 + 2 \sigma_{01}
\end{equation}
and
\begin{equation}
\label{eqn:var-decomp2}
\limn n\var[\E(\hat \tau | \cF)] = \sigma_\tau^2.
\end{equation}
\hl{(A proof of this decomposition is provided in the supplementary material as a part of the proof of Theorem 2.)}
The first three terms, \eqref{eqn:var-decomp1}, form the standard asymptotic variance of the difference-in-means estimator under no-interference, in which $Y_i^{(0)}$ and $Y_i^{(1)}$ are fixed quantities~\citep{freedman2008regressionA,freedman2008regressionB,lin2013agnostic}.  The last term, \eqref{eqn:var-decomp2}, captures the additional variation of the ``total population'' average treatment effect, which is variation that remains even if we were able to observe $Y_i^{(0)}$ and $Y_i^{(1)}$ for every unit.  This decomposition implies that if $\sigma_\tau^2$ stabilizes to a non-zero value, then confidence intervals constructed under the SUTVA assumption will only provide correct coverage conditional on $\cF$, and will fail to account for the fact that $Y_i^{(0)}$ and $Y_i^{(1)}$ may exhibit additional variation under the experimental design.  \hl{In Section~\ref{sec:variance-estimation} we provide a consistent estimator of $\sigma_\tau^2$ that can be used to adjust SUTVA-based standard errors.}

\hl{
\section{Variance estimation}
\label{sec:variance-estimation}

We can use the variance decomposition to guide the derivation of an appropriate variance estimator for $\hat \tau$.  In order to estimate the SUTVA portion of variance, it is enough to use the standard Neyman conservative variance estimator,
\[\hat V_{\text{SUTVA}}^2 = \frac{S_1^2}{N_1} + \frac{S_0^2}{N_0},\]
where
\[S_1^2 = \frac{1}{N_1 - 1} \sumin W_i(Y_i - \bar Y_1)^2, \qquad S_0^2 = \frac{1}{N_0 - 1} \sumin (1 - W_i)(Y_i - \bar Y_0)^2\]
where the sample variances for units assigned to the control group and treatment group, respectively, and
\[\bar Y_1 = \frac{1}{N_1} \sumin W_i Y_i, \qquad \bar Y_0 = \frac{1}{N_0} \sumin (1 - W_i) Y_i\]
are the sample means.
(Note that $\bar Y_1$ and $\bar Y_0$ are not the same as $\bar Y^{(1)}$ and $\bar Y^{(0)}$.)
It remains to handle $\sigma_\tau^2$, and under the restricted dependence conditions used in this paper, it is possible to derive a consistent estimator of the variance.  Denote the (random) population average treatment effect by
\[T = \bar Y^{(1)} - \bar Y^{(0)},\]
so that $\tau = \E[T]$ and $\sigma_\tau^2 = \var(T) / n$, and
\[\E[T^2] = \E\left[(\bar Y^{(1)})^2 + (\bar Y^{(0)})^2 - 2 \bar Y^{(1)} \bar Y^{(0)}\right].\]
Then we see that the plugin estimator
\[\hat V_\tau^2 = \bar Y_1^2 + \bar Y_0^2 - 2\bar Y_1 \bar Y_0 - \hat \tau^2\]
is consistent for $\var(T) = \E[T^2] - \tau^2$ as long as $\bar Y_1$ and $\bar Y_0$ are consistent for their limiting expectations.  The following proposition shows that this is indeed the case under a $o(n^{1/3})$ (approximate) maximal degree dependency structure.

\begin{proposition}
\label{prop:variance-estimation}
Under regularity conditions (Assumptions \ref{asm:design}-\ref{asm:variance-limits}) and restricted interference (either Assumption~\ref{asm:max-deg} or Assumptions~\ref{asm:approx-regularity}-\ref{asm:approx-local}), $\hat \sigma_\tau^2$ is consistent for $\sigma_\tau^2$. 
\end{proposition}

The following corollary follows immediately, and shows that $\hat V_\tau^2$ can be used as an ``interference adjustment'' to protect the standard SUTVA variance estimator against the forms of interference discussed in this paper.
\begin{corollary}
The variance estimator $\hat V = \hat V_{\text{SUTVA}}^2 + \hat V_\tau^2$ is asymptotically conservative for the variance of $\hat \tau$.
\end{corollary}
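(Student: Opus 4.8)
The plan is to reduce the corollary to three already-available ingredients: the variance formula and its decomposition from Theorem~\ref{thm:clt-perturbative} and \eqref{eqn:var-decomp1}--\eqref{eqn:var-decomp2}; the consistency of the interference adjustment from Proposition~\ref{prop:variance-estimation}; and a probability limit for the Neyman term $\hat V_{\text{SUTVA}}^2$. Writing $\sigma^2 = V_\cF + \sigma_\tau^2$ with $V_\cF = \tfrac{1-\pi}{\pi}\sigma_1^2 + \tfrac{\pi}{1-\pi}\sigma_0^2 + 2\sigma_{01}$ the conditional-on-$\cF$ part of the limiting variance, it suffices to show that $n\hat V$ converges in probability to a constant $v \ge \sigma^2$; from this $\hat V$ overestimates $\var(\hat\tau)$ in the limit, and, combined with the CLT, the associated Wald intervals are asymptotically valid.

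First I would pin down $n\hat V_{\text{SUTVA}}^2$. This needs a weak law of large numbers: under Assumptions~\ref{asm:design}--\ref{asm:variance-limits} and the restricted interference conditions, $N_1/n \pto \pi$ and $N_0/n \pto 1-\pi$ (Chebyshev on $N_1 \sim \bin(n,\pi)$), and $S_1^2 \pto \sigma_1^2$, $S_0^2 \pto \sigma_0^2$. The latter two follow by writing $S_w^2$, up to vanishing $1/(N_w-1)$ versus $1/n$ corrections, as $\tfrac{n}{N_w}\cdot\tfrac1n\sumin \one(W_i = w)(Y_i^{(w)})^2 - \bar Y_w^2$, and then applying the same machinery that underlies Proposition~\ref{prop:variance-estimation}: the squared outcomes $\{(Y_i^{(w)})^2\}$ inherit the (approximately) bounded-degree dependency structure of $\{Y_i\}$, so under Assumption~\ref{asm:bounded-moments} the relevant averages concentrate, while $\bar Y_w \pto \mu_w := \lim_n\E[\bar Y^{(w)}]$ and, because $\var(\bar Y^{(w)})\to 0$, the plug-in identity $\lim_n\E[\tfrac1n\sumin(Y_i^{(w)})^2] - \mu_w^2 = \sigma_w^2$ holds; throughout, $Y_i^{(w)}\indep W_i$ neutralizes the interaction between the ego treatment and $(Y_i^{(w)})^2$. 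Feeding these into $n\hat V_{\text{SUTVA}}^2 = \tfrac{n}{N_1}S_1^2 + \tfrac{n}{N_0}S_0^2$ and using the continuous mapping theorem,
\[ n\hat V_{\text{SUTVA}}^2 \;\pto\; \tfrac{1}{\pi}\sigma_1^2 + \tfrac{1}{1-\pi}\sigma_0^2 \;=\; V_\cF + \sigma_\Delta^2, \qquad \sigma_\Delta^2 := \sigma_1^2 + \sigma_0^2 - 2\sigma_{01}. \]
The term $\sigma_\Delta^2 = \lim_n\E\!\big[\tfrac1n\sumin\big((Y_i^{(1)}-\bar Y^{(1)})-(Y_i^{(0)}-\bar Y^{(0)})\big)^2\big] \ge 0$ is the classical unidentifiable variance of individual treatment effects, and is exactly the amount by which Neyman's estimator already overshoots the SUTVA part $V_\cF$.

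Combining with Proposition~\ref{prop:variance-estimation}, which on the $1/n$ scale gives $n\hat V_\tau^2 \pto \sigma_\tau^2$, yields
\[ n\hat V = n\hat V_{\text{SUTVA}}^2 + n\hat V_\tau^2 \;\pto\; V_\cF + \sigma_\Delta^2 + \sigma_\tau^2 \;=\; \sigma^2 + \sigma_\Delta^2 \;\ge\; \sigma^2 = \limn n\var(\hat\tau). \]
This is the claimed conservativeness; if one wants the coverage statement, combine it with the CLT $\sqrt n(\hat\tau-\tau)\wto N(0,\sigma^2)$ of Theorem~\ref{thm:clt-perturbative} via Slutsky to obtain $(\hat\tau-\tau)/\sqrt{\hat V}\wto N\!\big(0,\,\sigma^2/(\sigma^2+\sigma_\Delta^2)\big)$, a limit law with variance at most one, so the Wald interval $\hat\tau \pm z_{1-\alpha/2}\sqrt{\hat V}$ has asymptotic coverage at least $1-\alpha$.

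I expect the only real work to be the weak law of large numbers for the sample variances $S_0^2, S_1^2$: one has to check that centering at the empirical group means $\bar Y_0,\bar Y_1$ (rather than the unobservable population means) and the entanglement of $W_i$ with the random potential outcomes $Y_i^{(w)}$ through $\+W_{-i}$ leave no non-vanishing remainders, and that the degree bound controlling the covariances of $\{(Y_i^{(w)})^2\}$ is supplied by Assumption~\ref{asm:max-deg} (respectively by the $o(n^{1/3})$ strong-interference graph of Assumptions~\ref{asm:approx-regularity}--\ref{asm:approx-local}). This is, however, the same bookkeeping already carried out for Proposition~\ref{prop:variance-estimation} and for the consistency results of \citet{savje2017average}; the genuinely new content of the corollary is just the observation that the two excess pieces $\sigma_\Delta^2$ (from Neyman) and $\sigma_\tau^2$ (from interference) enter additively, each with a nonnegative sign.
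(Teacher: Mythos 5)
Your proposal is correct and follows the same route the paper intends: the paper offers no explicit proof (the corollary is asserted to ``follow immediately'' from Proposition~\ref{prop:variance-estimation} together with the standard conservativeness of the Neyman estimator), and your argument is precisely that implicit chain, with the missing details filled in. In particular, your computation that $n\hat V_{\text{SUTVA}}^2 \pto \tfrac{1}{\pi}\sigma_1^2 + \tfrac{1}{1-\pi}\sigma_0^2 = V_\cF + \sigma_\Delta^2$ with $\sigma_\Delta^2 = \sigma_1^2+\sigma_0^2-2\sigma_{01}\ge 0$, plus $n\hat V_\tau^2 \pto \sigma_\tau^2$ from Proposition~\ref{prop:variance-estimation}, is exactly the intended justification, and your identification of the law of large numbers for $S_0^2, S_1^2$ under the restricted-interference conditions as the only step requiring real work is accurate.
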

This variance estimator can then be used to construct $1-\alpha$ confidence intervals in the usual way,
\[\hat \tau \pm z_{\alpha / 2} \sqrt{\hat V},\]
where $z_{\alpha/2}$ is the appropriate Gaussian quantile.

}
  
\section{Simulations}
\label{sec:simulations}
This section is devoted to two sets of simulations that are designed to illuminate some of the practical implications of our theoretical findings.  The first simulation involves tests of Gaussianity and considers situations in which asymptotic inference may be invalid.  The second simulation involves the variance decomposition provided by equations~\eqref{eqn:var-decomp1} and~\eqref{eqn:var-decomp2} and considers situations in which inference built under the SUTVA assumption may be invalid.
For both simulations we use the same set of networks and generative response model.

In order to replicate as closely as possible the structural characteristics observed in real-world networks, we use five empirical networks from the \texttt{facebook100} dataset, an assortment of complete online friendship networks for one hundred colleges and universities collected from a single-day snapshot of Facebook in September 2005.  A detailed analysis of the social structure of these networks was given in~\citet{traud2012social}.  The five schools used are the California Institute of Technology, Haverford College, Amherst College, Michigan Technological University, and Wake Forest University; the only reason for the selection of these five particular schools was so as to produce a rough stratification of population sizes.  For each school, we use the largest connected component only.  Table~\ref{table:fb100-summary} contains basic summary statistics for the networks used.

\begin{table}[ht]
\centering
\begin{tabular}{lrrrrr}
  \hline
network & nodes & edges & avg.\ degree & avg.\ pairwise dist. & diameter \\ 
  \hline
Caltech & 762 & 16651 & 43.70 & 2.34 &   6 \\ 
  Haverford & 1446 & 59589 & 82.42 & 2.23 &   6 \\ 
  Amherst & 2235 & 90954 & 81.39 & 2.40 &   7 \\ 
  Michigan Tech & 3745 & 81901 & 43.74 & 2.84 &   7 \\ 
  Wake Forest & 5366 & 279186 & 104.06 & 2.51 &   9 \\ 
   \hline
\end{tabular}
\caption{Summary statistics for the five networks used in the simulation.}
\label{table:fb100-summary}
\end{table}

We use a simple response model that allows us to control the amount of dependence exhibited among the observations.  For network $G$ and nodes $i$ and $j$, let $\tilde Z_{\rho, i, j} = 1$ if nodes $i$ and $j$ are exactly $\rho$ units apart in graph $G$, and then define
\[Z_{\rho, i} = \bigg(\sum_j \tilde Z_{\rho, i, j}\bigg)^{-1} \sum_j \tilde Z_{\rho, i, j} W_j\]
to be the proportion of units which are exactly distance $\rho$ from $i$ that receive the treatment.  Then we model the outcome as
\[Y_i^{(w)} = \alpha_i^{(w)} + \sum_{\rho = 1}^{\rhomax} \beta_\rho^{(w)} Z_{\rho, i}\]
for $w = 0, 1$.
The intercept $\alpha_i = (\alpha_i^{(0)}, \alpha_i^{(1)})$ captures a heterogeneous direct effect.  The maximum distance parameter $\rhomax$ is an integer ranging from $0$ to the diameter of the graph.  By $\rhomax = 0$ we mean the summation is omitted entirely, so that $Y_i^{(w)} = \alpha_i^{(w)}$, and there is no spillover effect and hence no interference.  When $\rhomax = 1$, each unit is subject to a direct effect and a spillover effect governed by coefficient $\beta_1^{(w)}$ and the proportion $Z_{1,i}$ of neighbors of $i$ receiving the treatment.  Analogously, higher values of $\rhomax$ admit more distant sources of interference.

We model the coefficient vector as decaying exponentially in the graph distance,
\[\beta_\rho^{(1)} = 2 \gamma^\rho, \qquad \beta_\rho^{(0)} = \gamma^\rho,\]
for a decay parameter $\gamma \in (0, 1)$.  Therefore, each node receives a direct effect $\alpha_i^{(1)} - \alpha_i^{(0)}$ and an indirect effect
\[\sum_{\rho = 1}^{\rhomax} \gamma^\rho Z_{\rho, i}.\]

We control the amount of dependence by varying the parameters $\rhomax$, which explicitly controls the structure of the dependency graph, and $\gamma$, which controls the rate at which spillover effects dissipate as they travel through the network.  

\subsection{Tests of normality}
We first compute normality test statistics for a variety of parameter configurations.  We vary the decay rate $\gamma$, and the maximum dependency distance $\rhomax$.  The parameter values we use are  $\gamma \in \{0.5, 0.9, 0.99\}$, and $\rhomax \in \{2, 6\}$.  The maximum value $\rhomax = 6$ was used because all networks have diameter at least 6.  For every parameter configuration and each network, we generate $10$ instances of the direct effect.  The direct effect values $\alpha_i^{(1)}$ and $\alpha_i^{(0)}$ are sampled from independent exponential distributions with different means; the treatment group has mean $1 / 0.3$ and the control group has mean $2$.   

For each of the 10 instances, we sample 500 draws of the treatment vector as independent Bernoulli$(0.5)$ variables, and compute the outcomes and resulting difference-in-means estimate.  We report the test statistic and $p$-value of the Shapiro-Wilk (SW) test for normality~\citep{shapiro1965analysis}.   This produces 10 $p$-values for each network and parameter configuration, one for each instance of the direct effect.  Note that we use these $p$-values purely for exploratory purposes and do not require nor attempt multiple comparison control.

The results are displayed in Figure~\ref{fig:sw}.  Recall that $p$-values are uniform under the null hypothesis that the difference-in-means statistics are normally distributed, so that configurations in which most of the $p$-values are small may indicate a departure from normality.  The scenarios representing the greatest amount of interference are those in which the indirect effect is allowed to propagate over a long distance ($\rhomax = 6$) and in which the indirect effect does not decay much ($\gamma = 0.99$) as it travels across the network.  We see that the $p$-values are smallest for these configurations.  For all networks, the value of $\gamma$ needs to be quite large in order to cause serious problems; $p$-values appear to be roughly uniform for a dissipation rate of $\gamma = 0.5$ even when $\rhomax = 6$.  This supports the claim that having a sparse dependency graph is not necessary for asymptotic normality, since for these networks, the induced dependency graph when $\rhomax = 6$ is either complete or nearly complete.  Departures from normality also seem to be sensitive to the particular network structure; the Caltech and Michigan Tech networks seem to be quite well-behaved even under the strongest regimes of interference ($\gamma = 0.99$ and $\rhomax = 6$).

\begin{figure}
\centering
   \begin{subfigure}[b]{0.77\textwidth}
   \includegraphics[width=\textwidth]{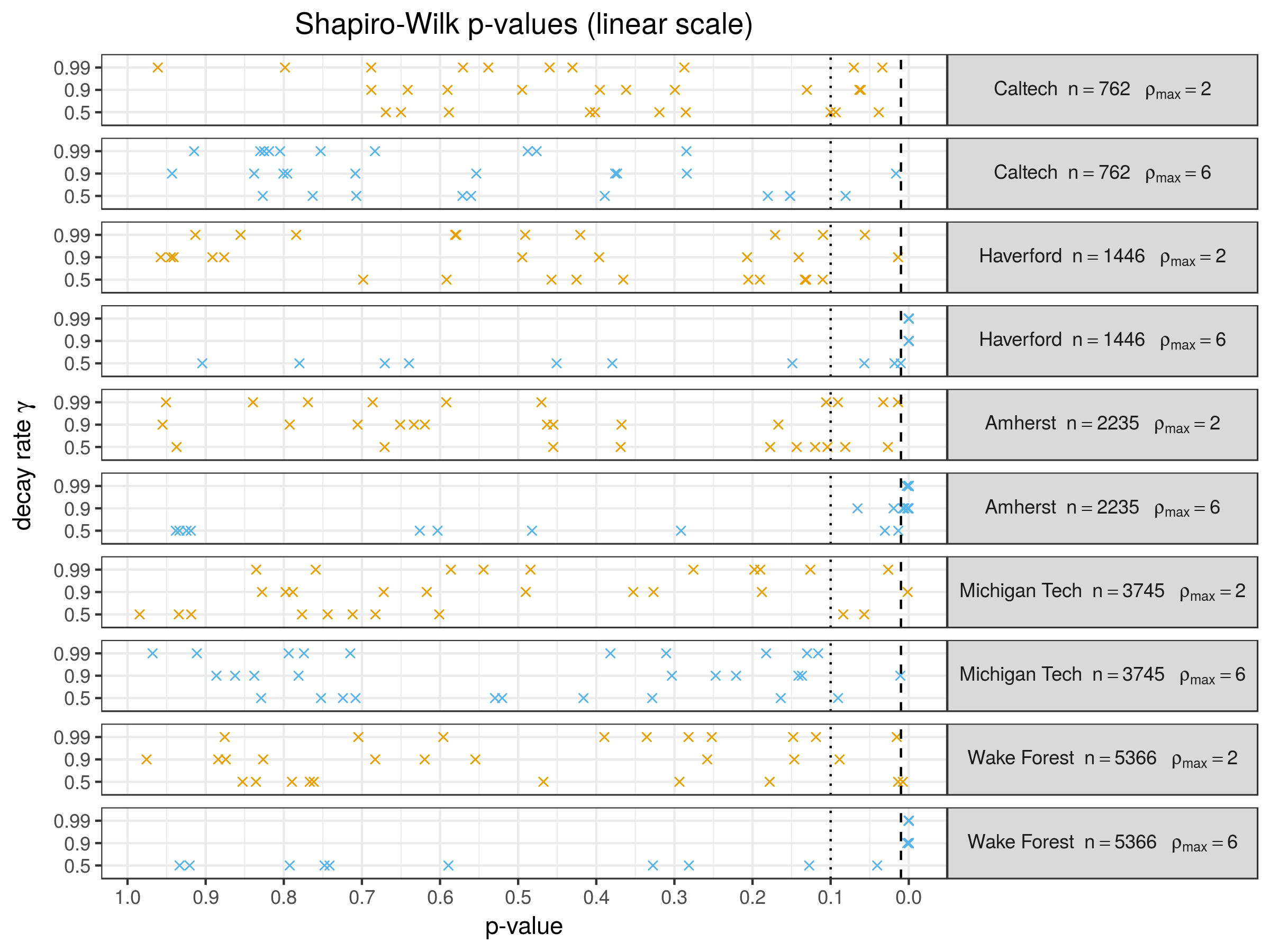}
\end{subfigure}
\begin{subfigure}[b]{0.77\textwidth}
   \includegraphics[width=\textwidth]{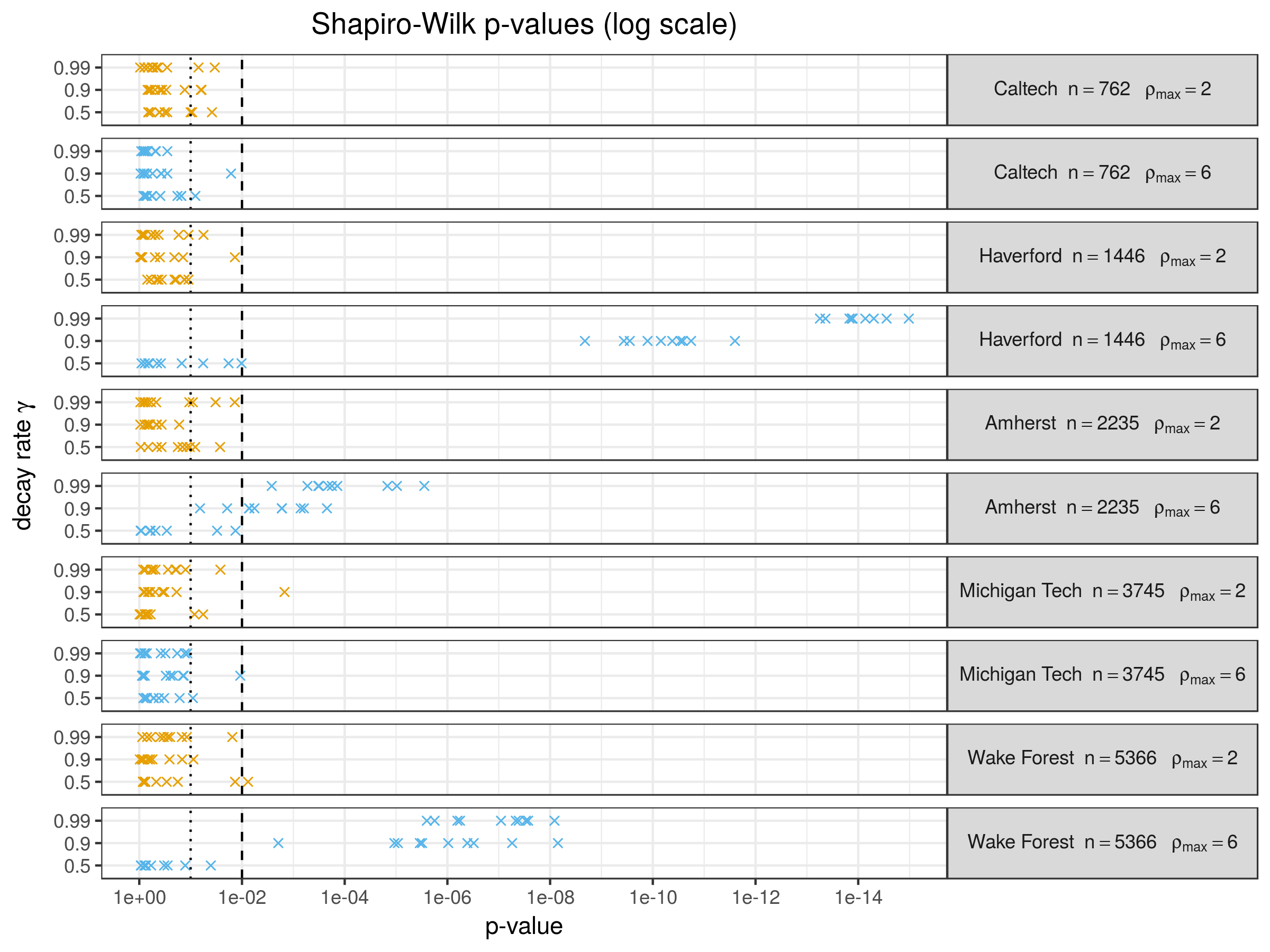}
\end{subfigure}
\caption{(top) Every data point is a $p$-value for the Shapiro-Wilk test against a Gaussian reference distribution.  Each panel represents a different network and dependency distance $\rhomax$ combination.  The panels with orange points correspond to $\rhomax = 2$ (less interference) and those with blue points correspond to $\rhomax = 6$ (more interference).  The vertical axis contains the three levels of the decay rate $\gamma$, ranging from $\gamma = 0.5$ (less interference) to $\gamma = 0.99$ (more interference).  The nominal cutoff values 0.1 (vertical dotted line) and 0.01 (vertical dashed line) are highlighted for reference.  (bottom) The same plot but using a logarithmic scale for the horizontal axis.}
\label{fig:sw}
\end{figure}

\subsection{Variance decompositions}
For this simulation we explore the relationship between the strength of interference and the resulting variance components.  We focus on the Caltech network, which, based on the previous simulation, appears to have a network structure such that the difference-in-means estimator for our response model appears to have a Gaussian distribution even under strong regimes of interference.  We draw a single set of $\alpha_i^{(0)}$ and $\alpha_i^{(1)}$ using the same distribution as the previous simulation, with exponential distributions of mean $1 / 0.3$ for the treatment group and mean $2$ for the control group.  We vary the maximum distance $\rhomax$ from $0$ to $5$ and the decay parameter $\gamma$ from $0.1$ to $0.9$ in increments of $0.1$.  For each parameter configuration, we draw 10,000 iterates of the treatment vector $\+W$ as iid Bernoulli$(0.5)$, and recompute the potential outcomes $Y_i^{(1)}$ and $Y_i^{(0)}$ each time.  We then use the potential outcomes to compute the variance components
\begin{align*}
\sigma_1^2 &= \avgin \E\left[(Y_i^{(1)} - \bar Y^{(1)})^2\right] \\
\sigma_0^2 &= \avgin \E\left[(Y_i^{(0)} - \bar Y^{(0)})^2\right] \\
\sigma_{01} &= \avgin \E\left[(Y_i^{(1)} - \bar Y^{(1)})(Y_i^{(0)} - \bar Y^{(0)})\right] \\
\sigma_\tau^2 &= n \var\left[\bar Y^{(1)} - \bar Y^{(0)}\right],
\end{align*}
where the expectation and variance are computed as finite population moments over the 10,000 simulation replicates.  We also compute the observed variance $\hat \sigma_{\text{DM}}^2$ of the difference-in-means estimator.  \hl{This is calculated as the empirical variance of the difference-in-means estimator over the 10,000 draws of the treatment vector.}

The SUTVA (conditional) variance is 
\[\sigma_{\text{SUTVA}}^2 = \sigma_1^2 + \sigma_0^2 + 2\sigma_{01}.\]
We display the ratio of the expected true variance of $\hat \tau$ to the conditional variance, $(\sigma_{\text{SUTVA}}^2 + \sigma_\tau^2) / \sigma_{\text{SUTVA}}^2$, as well as the observed ratio, $\hat \sigma_{\text{DM}}^2 / \sigma_{\text{SUTVA}}^2$.
The resulting ratios are displayed in Figure~\ref{fig:simulation-variances}.  The observed variances mostly track the expected variances.  Under SUTVA, $Y_i^{(0)}$ and $Y_i^{(1)}$ exhibit no additional variation so the observed variance appears to match $\sigma_\text{SUTVA}^2$.  As we allow units to influence units farther away in the graph, the variance ratio grows.  The discrepancy is not too large for fast decaying interference ($\gamma < 0.5$) but for $\gamma$ close to $1.0$ it can be drastic.  When $\rhomax = 5$ and $\gamma = 0.6$ the observed variance is only 7.4\% larger than $\sigma_\text{SUTVA}^2$, but for $\rhomax = 5$ and $\gamma = 0.9$ the observed discrepancy is 60.1\%.

\begin{figure}[t]
\centering
\includegraphics[width=0.7\textwidth]{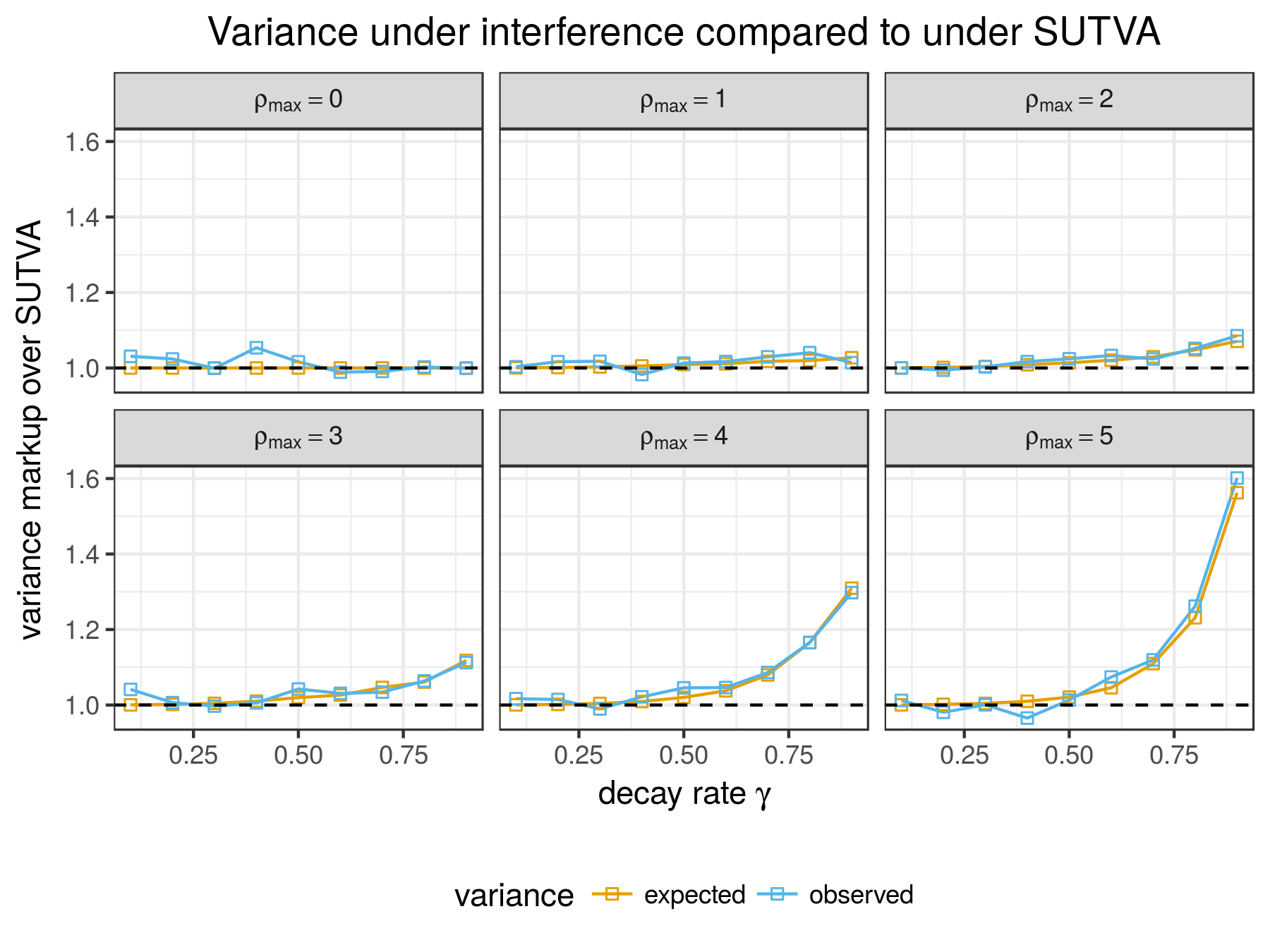}
\caption{Variance ratios for the Caltech network.  Each panel represents a different maximum distance $\rhomax$.  The horizontal axis is the decay rate $\gamma$ and the vertical axis marks the variance ratios.  The horizontal dotted line marks the baseline, which is a ratio of one.  The orange values are the expected variance ratios $(\sigma_{\text{SUTVA}}^2 + \sigma_\tau^2) / \sigma_{\text{SUTVA}}^2$, and the blue values are the observed variance ratios $\hat \sigma_{\text{DM}}^2 / \sigma_{\text{SUTVA}}^2$.  The upper-left most panel, $\rhomax = 0$, is the case when SUTVA is true.  The markup is greatest when $\rhomax$ and $\gamma$ are both large.  Note that the horizontal axis starts at 100\% and that the greatest observed ratio is about a 60\% increase in the variance over SUTVA.}
\label{fig:simulation-variances}
\end{figure}

\section{Discussion}
\label{sec:discussion}

In this work we have developed a framework for obtaining asymptotic results for causal estimators in randomized experiments in the presence of interference.  We contextualize the work of~\citet{savje2017average} within Stein's method and obtain asymptotic normality results.

Our two main results---one constraining the dependency degree (Theorem~\ref{thm:clt-depgraph}) and another placing conditions on how the treatment variables interact with the response variables (Theorem~\ref{thm:clt-perturbative})---highlight two general ways one may proceed for handling arbitrary interference.  The dependency graph approach follows a motif found in the interference literature of relying on local interference assumptions, such as the neighborhood treatment response condition.  Such assumptions, which enforce a sort of ``sparsity of interference,'' are often viewed as implausible yet necessary for tractability of results.  However, we have shown that progress is possible under certain dense regimes of interference.  \hl{Our result is still restrictive in the sense that it requires a condition of approximately sparse dependency.}  It is possible that more general advancements can be made by characterizing the behavior of the object $T$ in the main perturbative theorem (Lemma~\ref{lem:chatterjee}).

As discussed in Section~\ref{sec:setup}, the definition of $\sigma_\tau^2$ (equation~\eqref{eqn:sigma-tau} in Assumption~\ref{asm:variance-limits}) means that we have restricted ourselves to studying rate-optimal scenarios.  A useful extension would be to establish similar limiting results for the case when the difference-in-means $\bar Y^{(1)} - \bar Y^{(0)}$ converges at a slower-than-$\sqrt{n}$ rate.  A related issue is efficiency in the presence of interference.  The semiparametric efficiency bound \citep{hahn1998role} that serves as the basis for efficient estimation of average treatment effects in observational studies is reliant on independent units.  A characterization of similar semiparametric efficiency bounds for various levels of interference is a prerequisite for understanding whether efficient estimators remain optimal under interference.

\hl{We discuss how to approach statistical inference by noticing that $\sigma_\tau^2$ characterizes the difference between the variance under interference and under SUTVA.  It is also possible that estimates of $\sigma_\tau^2$ can serve as the basis for a statistical test of whether interference exists.  For a related recent idea, see~\citet{choi2018using}. }

Finally, our work is a novel application of Stein's method.  From a technical standpoint, our results demonstrate that tools from that literature can be used for establishing theoretical results for causal estimators under interference.  By overlaying the interference framework on top of Stein's method we are able to sidestep more complicated calculations or detailed assumptions about the structure of interference.  We have not addressed other statistical objects such as more general estimators and designs, or different estimands including the global treatment effect that compares all units in treatment to all units in control.  \hl{It is important to understand the behavior of the ``direct effect'' (EATE) estimand considered in \citet{savje2017average} and this paper, because it is the natural estimand of SUTVA-based estimators.  However it is worth considering the utility and interpretability of such an estimand in practice.}   Since interference at its core involves handling a dependent collection of random variables, we suspect that Stein's method may be useful for understanding other settings as well.

\bibliography{stein-clt}

\pagebreak
\appendix

\section{Proof of Theorem~\ref{thm:clt-depgraph}}
In this section we prove Theorem~\ref{thm:clt-depgraph}, restated here.
\thmdepgraph*
\begin{proof}
First, by conditioning on the $\sigma$-field defined in equation~\eqref{eqn:sigma-field},
\begin{align*}
n\var(\tilde \tau) &= n \E[\var(\tilde \tau | \cF)] + \var[\E(\tilde \tau | \cF)]
\end{align*}
The term $\var(\tilde \tau | \cF)$ is the variance of the Horvitz-Thompson estimator under SUTVA, which scales at rate $n^{-1/2}$.  It is written in terms of second moments of the outcomes $Y_i^{(1)}$ and $Y_i^{(0)}$, so the term $n \E[\var(\tilde \tau | \cF)]$ stabilizes by Assumption~\ref{asm:variance-limits}.  The second term is equal to $\var(\bar Y^{(1)} - \bar Y^{(0)}$ which also stabilizes by Assumption~\ref{asm:variance-limits}.  Therefore the entire variance  $n \var(\tilde \tau)$ converges to a non-zero limit $\sigma^2$.



Now decompose the Horvitz-Thompson estimator~\eqref{eqn:ht} as $\tilde \tau = \sumin \hat \tau_{i}$ where
\[\tilde \tau_i = \frac{1}{n} \left[\frac{W_iY_i^{(1)}}{\pi} - \frac{(1 - W_i)Y_i^{(0)}}{1 - \pi}\right].\]
Let $X_i = \sqrt{n}(\tilde \tau_i - \E[\tilde \tau_i])$ and denote $\sigma^2 = n\var(\tilde \tau)$.  Then 
\[S = S_n = \sumin X_i = \sqrt{n}(\tilde \tau - \tau) / \sigma.\]
By the uniform moment bound (Assumption~\ref{asm:bounded-moments}), $X_i = O_p(n^{-1/2})$, so for large enough $n$ there exist constants $C_1$ and $C_2$ such that
\[\bigg(\sumin \E[X_i^4]\bigg)^{1/2} \leq C_1 n^{-1/2}\]
and
\[\sumin \E|X_i|^3 \leq C_2 n^{-1/2}.\]
We can now apply Lemma~\ref{lem:dep-graph-bound}, which establishes for fixed $n$ that
\[\wass(S_n) \leq C_1 \frac{d_n^{3/2}}{n^{1/2}} + C_2 \frac{d_n^2}{n^{1/2}},\]
where we have ignored the $\sigma^2$ term because it stabilizes.
Therefore $\wass(S_n) \to 0$ whenever $d_n = o(n^{1/4})$, which is the constraint we have placed on the dependency graph (Assumption~\ref{asm:max-deg}).  Hence $S_n$ converges to a standard Gaussian random variable.
\end{proof}

\pagebreak
\section{Proof of Theorem~\ref{thm:clt-perturbative}}

\subsection{Proof of Corollary~\ref{cor:chatterjee}}
We first prove the version of Lemma~\ref{lem:chatterjee} for local dependencies.
\corchatterjee*
\begin{proof}
Notice that
\begin{align*}
\var(\E(T | S)) &\leq \var T \leq \frac{1}{4} \sum_{i, j = 1}^n \sum_{\substack{A \subset [n] \setminus \{i\} \\B \subset [n] \setminus \{j\} }} \frac{\cov(\Delta_if \Delta_i f^A, \Delta_jf \Delta_jf^B)}{n^2\binom{n}{|A|}\binom{n}{|B|}} \\
&\leq \frac{1}{4} \sum_{i, j = 1}^n \sum_{\substack{A \subset [n] \setminus \{i\} \\B \subset [n] \setminus \{j\} }} \frac{c_{i,j}}{n^2\binom{n}{|A|}\binom{n}{|B|}} = \frac{1}{4} \sum_{i, j = 1}^n c_{i,j}.
\end{align*}
Applying Lemma~\ref{lem:chatterjee} completes the proof.
\end{proof}

\subsection{Lemmas}
The lemmas in this section focus on getting a handle on the discrete derivative.  Throughout this section, $C_1, C_2, C_3, \dots$ indicate numerical constants that do not depend on $n$, and their values may change from line to line.

\begin{lemma}
\label{lem:elem-bound}
Let $A_i$ and $B_{i,r}$ be defined as in Lemma~\ref{lem:form-of-derivative} and assume the regularity conditions (Assumptions \ref{asm:design}-\ref{asm:variance-limits}).  For all $i$, $j$, $r$, and $s$, 
\begin{align}
|\cov(A_i, A_j)| \leq& \frac{C_1}{n^2} |\cov(Y_i, Y_j)| \label{eqn:elem-bound-1}\\
|\cov(B_{i,r}, A_j)| \leq& \left(\frac{C_1}{n^2} + \frac{C_2}{n^3}\right) |\cov(\Delta_i Y_r, Y_j)| + \frac{C_3}{n^3}|\cov(Y_r, Y_j)| \label{eqn:elem-bound-2}\\
|\cov(B_{i,r}, B_{j,s})| \leq& \left(\frac{C_1}{n^2} + \frac{C_2}{n^3}\right) |\cov(\Delta_i Y_r, \Delta_j Y_s)| + \left(\frac{C_3}{n^3} + \frac{C_4}{n^4}\right) |\cov(\Delta_i Y_r, Y_s)| \nonumber\\
&+ \left(\frac{C_5}{n^3} + \frac{C_6}{n^4}\right)|\cov(Y_r, \Delta_j Y_s)| + \frac{C_7}{n^4} |\cov(Y_r, Y_s)|, \label{eqn:elem-bound-3}
\end{align}
where the $C_k$ are constants, not necessarily the same from line to line.
\end{lemma}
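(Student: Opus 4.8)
The lemma is a purely algebraic bookkeeping exercise: each of $A_i$ and $B_{i,r}$ is an explicit rational expression in the treatment indicators and the outcomes, and we must bound the covariances of products of these expressions by covariances of the "clean" outcome quantities $Y_i, Y_j, \Delta_i Y_r, \Delta_j Y_s$, with explicit powers of $n$ out front. The plan is to first rewrite $A_i$ and $B_{i,r}$ so that the leading-order behavior in $n$ is visible, then expand the covariances and absorb everything that is not a covariance of outcome quantities into the constants $C_k$.

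\textbf{Step 1: Linearize the random denominators.} The key observation is that $N_1 = n\pi + O_p(\sqrt n)$, so $1/N_1 = 1/(n\pi) + O_p(n^{-3/2})$, and likewise $1/N_1' - 1/N_1 = O_p(n^{-3/2})$ since $N_1' - N_1 = W_i' - W_i \in \{-1,0,1\}$. Write each of $A_i$ and $B_{i,r}$ as a leading term of order $n^{-1}$ (with deterministic denominators $n\pi$, $n(1-\pi)$) plus correction terms of order $n^{-3/2}$ or smaller. Concretely, $A_i = n^{-1}\big[\tfrac{W_i}{\pi} - \tfrac{W_i'}{\pi}\big]Y_i^{(1)} - n^{-1}\big[\tfrac{1-W_i}{1-\pi}-\tfrac{1-W_i'}{1-\pi}\big]Y_i^{(0)} + (\text{lower order})$, and similarly $B_{i,r} = n^{-1}\big[\tfrac{W_r}{\pi}(Y_r - Y_r^i) - \tfrac{1-W_r}{1-\pi}(Y_r - Y_r^i)\big] + (\text{lower order}) = n^{-1}\cdot(\text{bounded coefficient})\cdot \Delta_i Y_r + (\text{lower order})$. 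The crucial structural fact is that the leading term of $B_{i,r}$ is proportional to $\Delta_i Y_r$, which is why $\cov(\Delta_i Y_r, \cdot)$ appears at order $n^{-2}$ while $\cov(Y_r, \cdot)$ only enters at order $n^{-3}$ or $n^{-4}$ — it comes entirely from cross terms between a leading factor and a correction factor, or between two corrections.

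\textbf{Step 2: Expand the covariances.} For $\cov(A_i, A_j)$, substituting the expansion and using bilinearity, every term is a covariance of a bounded-coefficient multiple of $Y_i^{(w)}$ against a bounded-coefficient multiple of $Y_j^{(w')}$, all scaled by $n^{-2}$; since $|\cov(aX, bY)| \le |ab|\,|\cov(X,Y)|$ for deterministic-magnitude-bounded $a,b$ (and here the "coefficients" are $W$-indicators, bounded by $1$, but one must be slightly careful since $W_i$ is random — here one uses that $Y_i^{(w)}$ is a function of $\+W_{-i}$ only, so $W_i$ is independent of $Y_i^{(w)}$, and $\cov(W_i Y_i^{(1)}, W_j Y_j^{(1)})$ can be bounded by a constant times $|\cov(Y_i^{(1)}, Y_j^{(1)})|$ plus a term that vanishes when $i\ne j$, all of which is $O(|\cov(Y_i,Y_j)|)$ after collapsing the $Y_i^{(w)}$ back to the observed-outcome covariance scale). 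This gives \eqref{eqn:elem-bound-1}. For $\cov(B_{i,r}, A_j)$ and $\cov(B_{i,r}, B_{j,s})$ one does the same, organizing the cross terms by which power of $n$ they carry: leading$\times$leading gives the $n^{-2}$ term with $\cov(\Delta_i Y_r,\cdot)$ or $\cov(\Delta_i Y_r, \Delta_j Y_s)$; leading$\times$correction gives $n^{-3}$ terms, which produce either $\cov(\Delta_i Y_r, Y_s)$ / $\cov(Y_r,\Delta_j Y_s)$ (when the correction still carries a $\Delta$) or $\cov(Y_r, Y_s)$-type terms; correction$\times$correction gives $n^{-4}$ terms. Matching these to the stated right-hand sides of \eqref{eqn:elem-bound-2} and \eqref{eqn:elem-bound-3} is then routine.

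\textbf{Main obstacle.} The genuinely delicate point is Step 1 — controlling the random-denominator corrections uniformly, i.e.\ showing that the "lower order" remainders really are $O(n^{-3/2})$ (or the appropriate power) in a sense strong enough that their contributions to the covariances are dominated by $n^{-3}$ and $n^{-4}$ times covariances of outcome quantities, rather than just $O_p$ statements that don't directly bound a covariance. One clean way around this is to write $\tfrac{1}{N_1} - \tfrac{1}{n\pi} = \tfrac{n\pi - N_1}{n\pi N_1}$ and note $|n\pi - N_1| \le \sumin|W_i - \pi|$ while $N_1 \ge$ some constant fraction of $n$ with overwhelming probability; but since we want covariance bounds, not just probability bounds, the honest route is probably to keep the exact expressions $\tfrac{1}{N_1}$, bound them crudely by $C/n$ on the event $\{N_1, N_0 \ge \epsilon n\}$ (which has probability $1 - e^{-cn}$ by Assumption~\ref{asm:design}), and show the contribution from the complementary event is negligible using Assumption~\ref{asm:bounded-moments} (bounded fourth moments give the needed integrability to make the small-probability event contribute $o(n^{-4})$). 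I expect most of the page count to go into this uniform-integrability bookkeeping, whereas the combinatorial expansion in Step 2 is mechanical.
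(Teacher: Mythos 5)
Your overall strategy---expose $\Delta_i Y_r$ as the leading-order content of $B_{i,r}$ and then sort covariance terms by their power of $n$---is the same as the paper's, but your Step 1 diverges from the paper's in a way that opens a real gap. You Taylor-expand the random denominators around deterministic centers, $1/N_1 = 1/(n\pi) + O_p(n^{-3/2})$, so that (writing $A_i = L_i + R_i$ for your leading term plus remainder) the cross terms $\cov(L_i, R_j)$ are generically of order $n^{-5/2}$ and involve the quantity $(N_1 - n\pi)/(n\pi N_1)$, which is a function of the \emph{entire} treatment vector. Such terms cannot be absorbed into the right-hand sides of \eqref{eqn:elem-bound-1}--\eqref{eqn:elem-bound-3}, every one of which is a power of $n$ times a covariance of outcome quantities alone: in particular \eqref{eqn:elem-bound-1} forces $\cov(A_i,A_j)$ to vanish whenever $\cov(Y_i,Y_j)$ does, and that is exactly the regime that matters when these bounds are later summed against Assumption~\ref{asm:approx-regularity}. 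The concentration-event bookkeeping you budget most of your effort for does not repair this, since it yields probability bounds rather than bounds by covariances of the $Y$'s.

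The paper avoids the issue by never linearizing. It keeps the random denominators and uses the exact algebraic identity
\[
B_{i,r} = \left[\frac{W_r}{N_1}  - \frac{1 - W_r}{N_0}\right]\Delta_i Y_r + W_r Y_r^i \,\frac{W_i - W_i'}{N_1N_1'} - (1 - W_r)Y_r^i\, \frac{W_i - W_i'}{N_0N_0'},
\]
which follows from $\tfrac{1}{N_1} - \tfrac{1}{N_1'} = \tfrac{W_i' - W_i}{N_1 N_1'}$. Here the coefficient multiplying $\Delta_i Y_r$ is exactly $O_p(n^{-1})$ and the coefficients multiplying $Y_r^i$ are exactly $O_p(n^{-2})$, because $N_1, N_0, N_1', N_0'$ are all $\Theta_p(n)$; no $n^{-3/2}$ remainder ever appears. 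Substituting $Y_r^i = Y_r - \Delta_i Y_r$ (and similarly $Y_s^j = Y_s - \Delta_j Y_s$) then yields precisely the four covariance types of \eqref{eqn:elem-bound-3} with the stated powers of $n$, and the whole proof is a few lines. If you redo your Step 2 with this exact decomposition, your ``main obstacle'' disappears entirely. One caveat applies to you and to the paper equally: both arguments pull random coefficients such as $W_r/N_1$ out of covariances as though they were deterministic constants of the stated order, which is not a literal inequality for coefficients dependent on the arguments; you at least flag this subtlety, while the paper passes over it silently.
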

\begin{proof}
Note that $B_{i,r}$ can be written as 
\[B_{i,r} = \left[\frac{W_r}{N_1}  - \frac{1 - W_r}{N_0}\right]\Delta_i Y_r + W_r Y_r^i \frac{W_i - W_i'}{N_1N_1'} - (1 - W_r)Y_r^i \frac{W_i - W_i'}{N_0N_0'}.\]
Equation~\eqref{eqn:elem-bound-1} follows from examining the form of $A_i$ and noting that $N_1 = O_p(n)$ and $N_0 = O_p(n)$.  For equation~\eqref{eqn:elem-bound-2}, note
\begin{align*}
|\cov(B_{i,r}, A_j)| &\leq \frac{C_1}{n^2} |\cov(\Delta_i Y_r, Y_j)| + \frac{C_2}{n^3}|\cov(Y_r^i, Y_j)|  \\
&= \frac{C_1}{n^2} |\cov(\Delta_i Y_r, Y_j)| + \frac{C_2}{n^3}(|\cov(Y_r, Y_j)| + |\cov(Y_r^i - Y_r, Y_j)|),
\end{align*}
which gives equation~\eqref{eqn:elem-bound-2}.
 Similarly, we have
\begin{align*}
|\cov(B_{i,r}, B_{j,s})| \leq& \frac{C_1}{n^2} |\cov(\Delta_i Y_r, \Delta_j Y_s)| + \frac{C_2}{n^3} |\cov(\Delta_i Y_r, Y_s^j)| \\
&+ \frac{C_3}{n^3} |\cov(Y_r^i, \Delta_j Y_s)| +  \frac{C_4}{n^4} |\cov(Y_r^i, Y_s^j)|,
\end{align*}
and rewriting $Y_r^i = Y_r - \Delta_i Y_r$ and $Y_s^j = Y_s - \Delta_i Y_s$ gives equation~\eqref{eqn:elem-bound-3}.
\end{proof}

\begin{lemma}
\label{lem:deriv-bound}
Under the regularity conditions (Assumptions \ref{asm:design}-\ref{asm:variance-limits}), there exist constants $C_1$ through $C_5$ such that
\begin{align*}
\frac{1}{n}\sum_{i,j} |\cov(\Delta_i f_n, \Delta_j f_n)| \leq& \frac{C_1}{n^2} \sum_{i,j} |\cov(Y_i, Y_j)| + \left(\frac{C_2}{n^2} + \frac{C_3}{n^3}\right) \sum_{i,j}\sum_{r \neq i} |\cov(\Delta_i Y_r, Y_j)| \\
&+ \left(\frac{C_4}{n^2} + \frac{C_5}{n^3}\right) \sum_{i,j} \sum_{r \neq i} \sum_{s \neq j} |\cov(\Delta_i Y_r, \Delta_j Y_s)|.
\end{align*}
\end{lemma}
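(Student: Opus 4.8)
The plan is to substitute the expression for the discrete derivative from Lemma~\ref{lem:form-of-derivative} into $\cov(\Delta_i f_n,\Delta_j f_n)$, expand bilinearly, bound each resulting piece with Lemma~\ref{lem:elem-bound}, and then re-sum, collecting everything into the three covariance families that appear in the statement. Concretely, Lemma~\ref{lem:form-of-derivative} gives
\[\cov(\Delta_i f_n,\Delta_j f_n) = n\,\cov\!\left(A_i + \sum_{r\neq i}B_{i,r},\; A_j + \sum_{s\neq j}B_{j,s}\right),\]
so the prefactor $1/n$ on the left-hand side exactly cancels the $n$ coming from the two $\sqrt n$ factors.

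Next I would use bilinearity of covariance, the triangle inequality, and symmetry of covariance: the cross term $\sum_{i,j}\sum_{s\neq j}|\cov(A_i,B_{j,s})|$ equals $\sum_{i,j}\sum_{r\neq i}|\cov(B_{i,r},A_j)|$ after relabeling, so it merges with the $\cov(B_{i,r},A_j)$ term and we obtain
\[\frac1n\sum_{i,j}|\cov(\Delta_i f_n,\Delta_j f_n)| \le \sum_{i,j}|\cov(A_i,A_j)| + 2\sum_{i,j}\sum_{r\neq i}|\cov(B_{i,r},A_j)| + \sum_{i,j}\sum_{r\neq i}\sum_{s\neq j}|\cov(B_{i,r},B_{j,s})|.\]
I would then apply \eqref{eqn:elem-bound-1}, \eqref{eqn:elem-bound-2}, and \eqref{eqn:elem-bound-3} to the three groups respectively. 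The first group becomes $\tfrac{C}{n^2}\sum_{i,j}|\cov(Y_i,Y_j)|$ directly. Applying \eqref{eqn:elem-bound-2} to the second group produces a term $(\tfrac{C}{n^2}+\tfrac{C}{n^3})\sum_{i,j}\sum_{r\neq i}|\cov(\Delta_i Y_r,Y_j)|$ of the desired shape, plus a term whose summand $|\cov(Y_r,Y_j)|$ does not depend on $i$; summing over $i$ costs only a factor $n-1$, cancelling one power of $n$, so that piece is absorbed into the first family. Applying \eqref{eqn:elem-bound-3} to the third group, the $|\cov(\Delta_i Y_r,\Delta_j Y_s)|$ piece is already of the third shape, the two mixed pieces $|\cov(\Delta_i Y_r,Y_s)|$ and $|\cov(Y_r,\Delta_j Y_s)|$ each lose one free summation index (a factor of order $n$) and fall into the second family, and the $|\cov(Y_r,Y_s)|$ piece loses two free indices (a factor of order $n^2$) and falls into the first family. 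Relabeling indices and combining numerical constants produces the claimed inequality.

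The only real work is this index bookkeeping: for each term produced by Lemma~\ref{lem:elem-bound} one must identify which of $i,j,r,s$ the summand genuinely depends on, since every free index contributes a factor of order $n$ that raises the bound by one power of $n$. The point is that the coefficients in Lemma~\ref{lem:elem-bound} carry enough negative powers of $n$ (namely $1/n^2$, $1/n^3$, $1/n^4$) that after each such absorption the remaining term is dominated by one of the three target families. No new hypotheses are needed: bounded fourth moments (Assumption~\ref{asm:bounded-moments}) and the fact that $N_1$ and $N_0$ are of order $n$ under the Bernoulli design (Assumption~\ref{asm:design}) are what justify the $1/n$ factors already built into Lemma~\ref{lem:elem-bound}, and the interference assumptions play no role here.
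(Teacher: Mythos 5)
Your proposal is correct and follows essentially the same route as the paper: expand $\Delta_i f_n$ via Lemma~\ref{lem:form-of-derivative}, bound the four resulting covariance groups with Lemma~\ref{lem:elem-bound}, and use symmetry plus the counting of free indices to absorb the leftover pieces into the three target families. Your write-up actually makes explicit the index-bookkeeping step that the paper compresses into ``exploit the symmetry in the summations and combine terms,'' but the argument is the same.
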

\begin{proof}
By expanding the form of the discrete derivative~\eqref{eqn:discrete-deriv}, we have
\begin{align*}
\frac{1}{n}|\cov(\Delta_i f_n, \Delta_j f_n)| =& |\cov(A_i, A_j)| + \sum_{r \neq i} |\cov(B_{i,r}, A_j)| \\
&+ \sum_{s \neq j} |\cov(A_i, B_{j,s})| + \sum_{r \neq i} \sum_{s \neq j} |\cov(B_{i,r}, B_{j,s})|.
\end{align*}
By summing over $i$ and $j$ substituting the bounds from Lemma~\ref{lem:elem-bound}, the right-hand side above is bounded above by
\begin{align*}
&\frac{1}{n^2} \sum_{i,j} \Bigg[C_1|\cov(Y_i, Y_j)| + \sum_{r\neq i} \left[\left(C_2 + \frac{C_3}{n}\right)|\cov(\Delta_i Y_r, Y_j)| + \frac{C_4}{n} |\cov(Y_r, Y_j)|\right] \\
&+ \sum_{s\neq j} \left[\left(C_5 + \frac{C_6}{n}\right)|\cov(Y_i, \Delta_j Y_s)| + \frac{C_7}{n} |\cov(Y_i, Y_s)|\right] \\
&+ \sum_{r \neq i} \sum_{s \neq j} \bigg[\left(C_8 + \frac{C_9}{n}\right)|\cov(\Delta_i Y_r, \Delta_j Y_s)| + \left(\frac{C_{10}}{n} + \frac{C_{11}}{n^2}\right) |\cov(\Delta_i Y_r, Y_s)| \\
&+ \left(\frac{C_{12}}{n} + \frac{C_{13}}{n^2}\right) |\cov(Y_r, \Delta_j Y_s)| + \frac{C_{14}}{n^2}|\cov(Y_r, Y_s)|\bigg]\Bigg] .
\end{align*}
We now exploit the symmetry in the summations and combine terms to give the desired result.
\end{proof}

\begin{lemma}[Restricted interference]
Under the regularity conditions (Assumptions~\ref{asm:design}-\ref{asm:variance-limits} and~\ref{asm:approx-regularity}) and approximate local interference (Assumption~\ref{asm:approx-local}), for every unit $i$, the total amount of interference that results from perturbing the treatment of unit $i$ satisfies
\[\sum_{r \neq i} |\Delta_i Y_r| = O_p(n^{1/3}).\]
\label{lem:restricted-interference}
\end{lemma}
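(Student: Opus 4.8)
Fix a unit $i$. The plan is to decompose the sum over $r \neq i$ according to the strong-interference graph $H_n$ supplied by Assumption~\ref{asm:approx-local}:
\[\sum_{r \neq i} |\Delta_i Y_r| \;=\; \sum_{r \in \cN_i^{H_n}} |\Delta_i Y_r| \;+\; \sum_{r \notin \cN_i^{H_n}} |\Delta_i Y_r|,\]
and to bound the two pieces separately. The first sum ranges over a deterministic index set of size at most $h_n = o(n^{1/3})$, while the second is controlled directly by the approximate-local-interference hypothesis.

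For the strong-interference term I would use a crude pointwise bound on each summand. By Assumption~\ref{asm:bounded-moments} --- equivalently, under the Bernoulli design of Assumption~\ref{asm:design}, a uniform bound $|Y_r(\+w)| \le M$ on all potential outcomes --- we have $|\Delta_i Y_r| = |Y_r - Y_r^i| \le |Y_r| + |Y_r^i| \le 2M$ for every $r$ and every realization of $\+W$. Since $|\cN_i^{H_n}| \le h_n$, this term is at most $2M h_n = o(n^{1/3})$, a purely deterministic $O(n^{1/3})$ bound with no randomness to track.

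For the weak-interference term, Assumption~\ref{asm:approx-local} states that $\sum_{r \notin \cN_i^{H_n}} |\Delta_i Y_r| \to 0$ almost surely, hence in probability; in particular it is $O_p(1)$. Adding the two bounds gives $\sum_{r \neq i} |\Delta_i Y_r| \le 2M h_n + O_p(1) = O_p(n^{1/3})$, which is the claim. There is no serious obstacle here; the only points requiring a little care are (i) that we invoke a genuine pointwise bound on the potential outcomes, which is licensed by the Bernoulli-design equivalence noted after Assumption~\ref{asm:bounded-moments} rather than by bounded fourth moments alone, and (ii) the bookkeeping observation that an almost-surely vanishing remainder is $O_p(1)$, so the weak-interference contribution does not inflate the $n^{1/3}$ rate already set by the neighborhood size $h_n$.
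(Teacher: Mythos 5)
Your proof is correct and follows essentially the same route as the paper's: the identical decomposition of the sum into the strong-interference neighborhood $\cN_i^{H_n}$ (bounded by a constant times $h_n = o(n^{1/3})$ via the uniform bound on outcomes) and the weak-interference remainder (which vanishes almost surely by Assumption~\ref{asm:approx-local}). Your added care in noting that the pointwise bound on $|\Delta_i Y_r|$ comes from the uniform bound on potential outcomes under the Bernoulli design, rather than from the fourth-moment condition alone, is a slightly more explicit justification of a step the paper states tersely, but it is not a different argument.
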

\begin{proof}
We divide the interference emanating from unit $i$ into collections of ``weak'' and ``strong'' interference, this partition being specified by the neighborhood $\cN_i^{H_n}$.
\begin{align*}
\sum_{r \neq i} |\Delta_i Y_r| &= \sum_{r \notin \cN_i^{H_n}} |\Delta_i Y_r| + \sum_{r \in \cN_i^{H_n}} |\Delta_i Y_r|.
\end{align*}
The first summand tends to $0$ as $n \to \infty$ by Assumption~\ref{asm:approx-local}.   The second summand is bounded above by $Ch_n$ because of the uniform moment bound (Assumption~\ref{asm:bounded-moments}), and the result follows since $h_n = o(n^{1/3})$.
Here $C$ is a constant, and the quantities $h_n$, $\delta_k$, $H_n$, and $\cN_i^{H_n}$ are as defined in Assumption~\ref{asm:approx-local}.
\end{proof}

\begin{lemma}
Under the regularity conditions (Assumptions~\ref{asm:design}-\ref{asm:variance-limits} and~\ref{asm:approx-regularity}) and approximate local interference (Assumption~\ref{asm:approx-local}),
\label{lem:useful1}
\[\frac{1}{n^2} \sumin \sum_{r \neq i} |\cov(\Delta_i Y_r, Y_i)| = o(1).\]
\end{lemma}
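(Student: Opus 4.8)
The plan is to reduce the claim to Lemma~\ref{lem:restricted-interference} via a crude covariance bound, and then to convert the resulting in-probability estimate into a statement about expectations using bounded convergence.

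First I would use that, under Assumptions~\ref{asm:design}--\ref{asm:bounded-moments}, the potential outcomes are uniformly bounded by a constant $M$, so that $|Y_i|\le M$ and $|\Delta_i Y_r|\le 2M$. Since subtracting a constant from the second argument of a covariance does not change it, $\cov(\Delta_i Y_r, Y_i) = \E[\Delta_i Y_r\,(Y_i-\E Y_i)]$, and hence $|\cov(\Delta_i Y_r, Y_i)| \le 2M\,\E|\Delta_i Y_r|$. Summing,
\[
\frac{1}{n^2}\sumin\sum_{r\neq i}|\cov(\Delta_i Y_r, Y_i)| \;\le\; \frac{2M}{n^2}\,\E\!\left[\sumin\sum_{r\neq i}|\Delta_i Y_r|\right],
\]
so it suffices to show the bracketed expectation is $o(n^2)$.

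By Lemma~\ref{lem:restricted-interference}, $\sum_{r\neq i}|\Delta_i Y_r| = O_p(n^{1/3})$ for each $i$. Reading that bound uniformly over the $n$ nodes --- it comes from the split $\sum_{r\neq i}|\Delta_i Y_r| \le 2M h_n + \sum_{r\notin\cN_i^{H_n}}|\Delta_i Y_r|$, in which the second term vanishes almost surely by Assumption~\ref{asm:approx-local} and $h_n = o(n^{1/3})$ --- and summing over $i$ gives $\sumin\sum_{r\neq i}|\Delta_i Y_r| = o_p(n^2)$. On the other hand this sum is deterministically at most $2Mn(n-1) < 2Mn^2$, so $\tfrac{1}{n^2}\sumin\sum_{r\neq i}|\Delta_i Y_r|$ is a bounded random variable that tends to $0$ in probability; bounded convergence then yields $\tfrac{1}{n^2}\,\E[\sumin\sum_{r\neq i}|\Delta_i Y_r|]\to 0$, which together with the display above proves the lemma.

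The main obstacle is the passage from the \emph{per-node} almost-sure statement in Assumption~\ref{asm:approx-local} to control of the average $\tfrac{1}{n}\sumin\big(\tfrac{1}{n}\sum_{r\notin\cN_i^{H_n}}|\Delta_i Y_r|\big)$ across a triangular array; the uniform bound $|\Delta_i Y_r|\le 2M$ is exactly what makes this tractable, reducing it to a bounded-convergence argument, but making it rigorous requires interpreting Assumption~\ref{asm:approx-local} (as Lemma~\ref{lem:restricted-interference} implicitly does) as controlling all $n$ nodes of the $n$-th population simultaneously rather than a single fixed node. The covariance bound and the bookkeeping with $h_n$ are routine.
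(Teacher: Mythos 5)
Your proposal takes essentially the same route as the paper: bound $|\cov(\Delta_i Y_r, Y_i)|$ by a constant times $\E|\Delta_i Y_r|$ using the uniform moment bound, then invoke Lemma~\ref{lem:restricted-interference} and sum over $i$. The one difference is that you explicitly justify passing from the $O_p(n^{1/3})$ statement to a bound on the expectation via the deterministic bound $|\Delta_i Y_r|\le 2M$ and bounded convergence, a step the paper's proof performs implicitly; your version is the more careful one, and your closing remark correctly identifies that the residual issue is the uniformity over $i$ of Assumption~\ref{asm:approx-local} in the triangular-array setting, which the paper also leaves implicit.
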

\begin{proof}
By the uniform moment bound there exists a constant $C$ such that
\[\frac{1}{n^2} \sumin \sum_{r \neq i} |\cov(\Delta_i Y_r, Y_i)| \leq \frac{C}{n^2} \sumin \sum_{r \neq i} \E|\Delta_i Y_r|.\]
Now using $\sum_{r \neq i} |\Delta_i Y_r| = O_p(n^{1/3})$, as established by Lemma~\ref{lem:restricted-interference}, we find that the right hand side of the inequality above is bounded above by
\[\frac{C}{n^2} \sum_{i=1}^n n^{1/3} = Cn^{-2/3} = o(1).\]
\end{proof}

\begin{lemma}
Under the regularity conditions (Assumptions~\ref{asm:design}-\ref{asm:variance-limits} and~\ref{asm:approx-regularity}) and approximate local interference (Assumption~\ref{asm:approx-local}),
\label{lem:useful2}
\begin{align*}
\frac{1}{n^2} \sum_{i,j} \sum_{\substack{r \neq i \\ r \neq j}} |\cov(\Delta_i Y_r, \Delta_j Y_r)| = o(1).
\end{align*}
\end{lemma}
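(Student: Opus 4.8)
The plan is to collapse two of the three summation indices. Fixing the ``middle'' unit $r$, I would bound the inner double sum over $i$ and $j$ by a second moment of the total interference \emph{received} by $r$, and then control that second moment via the strong/weak split supplied by Assumption~\ref{asm:approx-local}. The reason this term needs separate treatment is that $\cov(\Delta_i Y_r, \Delta_j Y_r)$ is exactly the $s=r$ diagonal that Assumption~\ref{asm:approx-regularity}(c) omits, so that assumption cannot be cited directly.

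The first step uses the uniform bound on the potential outcomes implied by Assumption~\ref{asm:bounded-moments} (so $|\Delta_k Y_r| \le 2C$ for a constant $C$) together with the elementary inequality $|\cov(X,Y)| \le \E|XY| + \E|X|\,\E|Y|$. Summing over $i \neq r$ and $j \neq r$ gives, for each fixed $r$,
\[
\sum_{i \neq r}\sum_{j \neq r} |\cov(\Delta_i Y_r, \Delta_j Y_r)| \;\le\; \E[T_r^2] + (\E T_r)^2 \;\le\; 2\,\E[T_r^2], \qquad T_r := \sum_{k \neq r}|\Delta_k Y_r|,
\]
so the task reduces to showing $\frac{1}{n^2}\sum_{r} \E[T_r^2] = o(1)$.

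Next I would decompose $T_r = S_r + R_r$ along the strong-interference neighborhood, where $S_r := \sum_{k \in \cN_r^{H_n}}|\Delta_k Y_r|$ and $R_r := \sum_{k \notin \cN_r^{H_n}}|\Delta_k Y_r|$. Since $|\cN_r^{H_n}| \le h_n$ and each term is at most $2C$, we have the \emph{deterministic} bound $S_r \le 2Ch_n$; and by the second entry in the maximum of Assumption~\ref{asm:approx-local} (with the free node taken to be $r$), $R_r \to 0$ almost surely, while $R_r \le 2Cn$ trivially. Hence $T_r^2 \le 2S_r^2 + 2R_r^2 \le 8C^2 h_n^2 + 4Cn\,R_r$, using $R_r^2 \le 2Cn\,R_r$. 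Taking expectations, summing over $r$, and dividing by $n^2$,
\[
\frac{1}{n^2}\sum_{i,j}\sum_{\substack{r \neq i \\ r \neq j}}|\cov(\Delta_i Y_r,\Delta_j Y_r)| \;\le\; \frac{16C^2 h_n^2}{n} + \frac{8C}{n}\sum_{r=1}^n \E[R_r].
\]
The first term vanishes because $h_n = o(n^{1/3})$, and the second vanishes because Assumption~\ref{asm:approx-local} forces the averaged weak interference $\frac{1}{n}\sum_r \E[R_r]$ to zero; this last step is the same passage from almost-sure vanishing to vanishing expectation already carried out in the proofs of Lemmas~\ref{lem:restricted-interference} and~\ref{lem:useful1}.

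The main obstacle is conceptual rather than computational: one cannot invoke Assumption~\ref{asm:approx-regularity}(c) here, so the diagonal $s=r$ contribution must be isolated and handled by the $\E[T_r^2]$ reduction and the strong/weak split, in which the strong neighborhood contributes only a deterministic $O(h_n)=o(n^{1/3})$ and the weak part is asymptotically negligible. The only delicate technical point is transferring the almost-sure statement in Assumption~\ref{asm:approx-local} to a bound on $\frac{1}{n}\sum_r \E[R_r]$, which mirrors the argument used for the earlier lemmas in this appendix.
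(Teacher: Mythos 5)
Your proof is correct, and it reaches the conclusion by a genuinely different decomposition than the paper's. The paper keeps the triple sum intact and partitions it into four cases according to whether $r \in \cN_i^{H_n}$ and whether $r \in \cN_j^{H_n}$: the three cases involving at least one weak link are killed directly by Assumption~\ref{asm:approx-local}, and the strong--strong case is bounded by noting that $\sum_{r \in \cN_i^{H_n} \cap \cN_j^{H_n}} |\cov(\Delta_i Y_r, \Delta_j Y_r)|$ is $O(h_n)$ and nonzero only for the at most $n h_n^2$ pairs $(i,j)$ with intersecting neighborhoods, giving $O(h_n^3/n)$. You instead fix the receiving node $r$, collapse the $(i,j)$ double sum into $2\,\E[T_r^2]$ for the total received interference $T_r$, and only then split $T_r$ into its strong and weak parts. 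Your route is slightly cleaner and in fact sharper on the strong part: the deterministic bound $S_r \le 2Ch_n$ yields $O(h_n^2/n)$ rather than $O(h_n^3/n)$, so this particular term would tolerate $h_n = o(n^{1/2})$ (the $o(n^{1/3})$ rate is still forced elsewhere in the proof of Theorem~\ref{thm:clt-perturbative}, so nothing is gained globally). You are also right that Assumption~\ref{asm:approx-regularity}(c) excludes the $s=r$ diagonal, which is precisely why this lemma exists. The one delicate point you flag --- passing from the almost-sure statement $R_r \to 0$ of Assumption~\ref{asm:approx-local} to $\frac{1}{n}\sum_r \E[R_r] \to 0$, which does not follow from dominated convergence alone since the pointwise bound $R_r \le 2Cn$ grows --- is indeed the same implicit step the paper takes in Lemmas~\ref{lem:restricted-interference} and~\ref{lem:useful1} and in its own handling of the three weak cases here, so your argument is at the same level of rigor as the original.
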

\begin{proof}
Denote $\Delta_r^{i,j} = \cov(\Delta_i Y_r, \Delta_j Y_r)$.  We proceed by partitioning the sum depending on whether $r$ belongs to the neighborhoods of $i$ and $j$ as defined in Assumption~\ref{asm:approx-local}.  That is, we can write
\begin{align*}
\frac{1}{n^2} \sum_{i,j} \sum_{\substack{r \neq i \\ r \neq j}} \Delta_r^{i,j} &= \frac{1}{n^2} \sum_{i,j} \left[
\sum_{\substack{r \in \cN_i^{H_n} \\ r \in \cN_j^{H_n}}}|\Delta_r^{i,j}| + 
\sum_{\substack{r \in \cN_i^{H_n} \\ r \not \in \cN_j^{H_n}}}|\Delta_r^{i,j}| + 
\sum_{\substack{r \not \in \cN_i^{H_n} \\ r \in \cN_j^{H_n}}}|\Delta_r^{i,j}| + 
\sum_{\substack{r \not \in \cN_i^{H_n} \\ r \not \in \cN_j^{H_n}}}|\Delta_r^{i,j}|\right] \\
&\leq \frac{1}{n^2} \sum_{i,j} \left[
\sum_{\substack{r \in \cN_i^{H_n} \\ r \in \cN_j^{H_n}}}|\Delta_r^{i,j}| + 
\sum_{r \not \in \cN_j^{H_n}}|\Delta_r^{i,j}| + 
\sum_{r \not \in \cN_i^{H_n}}|\Delta_r^{i,j}| + 
\sum_{r \not \in \cN_i^{H_n}}|\Delta_r^{i,j}|\right] 
\end{align*}
Now, by Assumption~\ref{asm:approx-local}, each of the inner sums of the last three terms tends to zero in the limit (and the outer sums also tend to zero because there are $n^2$ summands offset by the $n^2$ in the denominator).  For the first term, the sum is zero whenever the intersection of $\cN_i^{H_n}$ and $\cN_j^{H_n}$ is empty, and of order $h_n$ otherwise.  Therefore, 
\begin{align*}
&\frac{1}{n^2} \sum_{i,j} \sum_{\substack{r \neq i \\ r \neq j}} \Delta_r^{i,j}\leq \frac{C}{n^2} \sum_{i,j} h_n\one(|\cN_i^{H_n} \cap \cN_j^{H_n}| > 0) + o(1) \\
& \leq \frac{Ch_n}{n^2} \sumin \sum_{s=1}^n \sum_{j=1}^n \one(s \in \cN_i^{H_n}, j \in \cN_{s}^{H_n}) + o(1)\\
&\leq \frac{Ch_n^3}{n} + o(1).
\end{align*}
The proof is finished by noting that $h_n = o(n^{1/3})$, as specified by Assumption~\ref{asm:approx-local}.
\end{proof}

\begin{lemma}
\label{lem:delta-bounds}
In addition to the regularity conditions (Assumptions \ref{asm:design}-\ref{asm:variance-limits}), assume that Assumption~\ref{asm:approx-local} (approximate local independence) holds.  Then for all $i \in [n]$ and $A \in [n] \setminus \{i\}$,
\begin{align}
|\Delta_i f_n| &= O_p(n^{-1/2}) \label{eqn:delta-bound} \\
|\Delta_i f_n^A| &= O_p(n^{-1/2}). \label{eqn:delta-A-bound}
\end{align}
\end{lemma}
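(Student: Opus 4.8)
The plan is to work from the explicit expression for the discrete derivative furnished by Lemma~\ref{lem:form-of-derivative}, $\Delta_i f_n = \sqrt{n}\big(A_i + \sum_{r\neq i} B_{i,r}\big)$, and to bound the direct-effect piece $A_i$ and the interference piece $\sum_{r\neq i} B_{i,r}$ separately. Throughout I would use Assumption~\ref{asm:design}, which guarantees $N_1^{-1}, N_0^{-1}, (N_1')^{-1}, (N_0')^{-1} = O_p(n^{-1})$, and Assumption~\ref{asm:bounded-moments}, which guarantees that the outcomes $Y_r^{(w)}$ and the pivots $Y_r^i$ are $O_p(1)$.

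For $A_i$: each of the four fractions appearing in $A_i$ is either $0$ (indeed $A_i = 0$ identically on the event $\{W_i = W_i'\}$, since then $N_1 = N_1'$ and $N_0 = N_0'$) or of order $n^{-1}$, and it multiplies $Y_i^{(1)}$ or $Y_i^{(0)}$, so $A_i = O_p(n^{-1})$ and $\sqrt{n}\,A_i = O_p(n^{-1/2})$.

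For $\sum_{r\neq i} B_{i,r}$: I would use the rewriting of $B_{i,r}$ from the proof of Lemma~\ref{lem:elem-bound},
\[B_{i,r} = \Big[\tfrac{W_r}{N_1} - \tfrac{1-W_r}{N_0}\Big]\Delta_i Y_r + W_r Y_r^i\,\tfrac{W_i-W_i'}{N_1 N_1'} - (1-W_r)Y_r^i\,\tfrac{W_i-W_i'}{N_0 N_0'}.\]
The last two ``pivot'' terms carry a prefactor of order $n^{-2}$, and since $\sum_{r\neq i}|Y_r^i| = O_p(n)$ by bounded moments they contribute only $O_p(n^{-1})$ to $\sum_{r\neq i} B_{i,r}$. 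For the first term, $\big|\tfrac{W_r}{N_1} - \tfrac{1-W_r}{N_0}\big| \le N_1^{-1}\vee N_0^{-1} = O_p(n^{-1})$ uniformly in $r$, whence $\big|\sum_{r\neq i}\big[\tfrac{W_r}{N_1}-\tfrac{1-W_r}{N_0}\big]\Delta_i Y_r\big| = O_p(n^{-1})\cdot\sum_{r\neq i}|\Delta_i Y_r|$, and Lemma~\ref{lem:restricted-interference}---which is the only place Assumption~\ref{asm:approx-local} enters---bounds the total emanating interference $\sum_{r\neq i}|\Delta_i Y_r|$. Collecting these estimates into $\Delta_i f_n = \sqrt{n}\big(A_i + \sum_{r\neq i} B_{i,r}\big)$ yields \eqref{eqn:delta-bound}.

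Finally, \eqref{eqn:delta-A-bound} should follow from \eqref{eqn:delta-bound} with no extra work once one observes that $\Delta_i f_n^A \equaldist \Delta_i f_n$: because $i \notin A$, the perturbed vector $\+W^A$ is again a vector of i.i.d.\ Bernoulli$(\pi)$ coordinates and is independent of the copy $W_i'$ sitting in position $i$, so $(\+W^A, W_i')$ has the same law as $(\+W, W_i')$, and $\Delta_i f_n^A$ and $\Delta_i f_n$ are the same measurable function of these respective pairs. I expect the main obstacle to be the interference sum $\sum_{r\neq i} B_{i,r}$: the crude bound of $n-1$ summands each of order $n^{-1}$ would only give $\sqrt{n}\cdot O_p(1)$, so the argument genuinely hinges on the a priori control of $\sum_{r\neq i}|\Delta_i Y_r|$ supplied by the approximate local interference assumption (via Lemma~\ref{lem:restricted-interference}).
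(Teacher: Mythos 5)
Your overall strategy is reasonable and is a legitimate alternative to the paper's: the paper bounds the second moment $\E(\Delta_i f_n)^2$ using the covariance machinery of Lemma~\ref{lem:elem-bound} (mirroring Lemma~\ref{lem:deriv-bound}) and then applies Markov's inequality, whereas you bound $|\Delta_i f_n|$ directly in probability, term by term. Your treatment of $A_i$, of the ``pivot'' terms in $B_{i,r}$, and your equidistribution argument for $\Delta_i f_n^A$ (where you in fact supply more detail than the paper, which simply asserts $\Delta_i f_n^A \equaldist \Delta_i f_n$) are all fine.

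The gap is in the rate accounting for the one term you identify as the crux. Lemma~\ref{lem:restricted-interference} gives only $\sum_{r\neq i}|\Delta_i Y_r| = O_p(n^{1/3})$, so your bound for the leading interference contribution is
\[\sqrt{n}\cdot O_p(n^{-1})\cdot \sum_{r\neq i}|\Delta_i Y_r| = \sqrt{n}\cdot O_p(n^{-1})\cdot O_p(n^{1/3}) = O_p(n^{-1/6}),\]
which falls short of the claimed $O_p(n^{-1/2})$; you never carry out this arithmetic, and the conclusion does not follow from the lemma you cite. To close the argument one needs $\sum_{r\neq i}|\Delta_i Y_r| = O_p(1)$, which is exactly what the paper's proof invokes at the corresponding step (attributing it directly to Assumption~\ref{asm:approx-local} rather than to Lemma~\ref{lem:restricted-interference}); note that this is a strictly stronger statement than the $O_p(n^{1/3})$ bound, since Assumption~\ref{asm:approx-local} only forces the out-of-neighborhood sum to vanish while the in-neighborhood sum can a priori be of order $h_n$. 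So either justify the $O_p(1)$ control (e.g.\ by an additional assumption on the in-neighborhood derivatives) or your proof delivers only $O_p(n^{-1/6})$, which is too weak for the downstream use of this lemma in bounding $\sumin\E|\Delta_i f_n|^3$.
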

\begin{proof}
By a similar argument as in Lemma~\ref{lem:deriv-bound}, 
\begin{align*}
\E(\Delta_i f_n)^2 &\leq n \left[\var(A_i) + \sum_{r \neq i} \cov(A_i, B_{i,r}) + \sum_{r \neq i} \sum_{s \neq i} \cov(B_{i,r}, B_{i,s})\right] \\
&\leq \frac{C_1}{n} \var(Y_i) + \left(\frac{C_2}{n} + \frac{C_3}{n^2}\right) \sum_{r \neq i} \cov(Y_i, \Delta_i Y_r) + \left(\frac{C_4}{n} + \frac{C_5}{n^2}\right)\sum_{r \neq i} \sum_{s \neq i} \cov(\Delta_i Y_r, \Delta_i Y_s) \\
&\leq \frac{C_1}{n} + \left(\frac{C_2}{n} + \frac{C_3}{n^2}\right) \sum_{r \neq i} \E|\Delta_i Y_r| + \left(\frac{C_4}{n} + \frac{C_5}{n^2}\right)\sum_{r \neq i} \sum_{s \neq i} \E[|\Delta_i Y_r\Delta_i Y_s|] \\
&= \frac{C_1}{n} + \left(\frac{C_2}{n} + \frac{C_3}{n^2}\right) \sum_{r \neq i} \E|\Delta_i Y_r| + \left(\frac{C_4}{n} + \frac{C_5}{n^2}\right)\bigg(\sum_{r \neq i} \E|\Delta_i Y_r|\bigg)^2.
\end{align*}
The whole right-hand side is then $O(n^{-1})$ by the fact that $\sum_{r \neq i} |\Delta_i Y_r| = O_p(1)$ (Assumption~\ref{asm:approx-local}).
Then~\eqref{eqn:delta-bound} follows from Markov's inequality.  Equation~\eqref{eqn:delta-A-bound} immediately follows because $\Delta_i f_n^A$ is equal in distribution to $\Delta_i f_n$.
\end{proof}

\subsection{Proof of main theorem}
We are now ready to prove Theorem~\ref{thm:clt-perturbative}, restated here.
\thmperturbative*
\begin{proof}
We first compute the limiting variance $\sigma^2 := \limn n \var(\hat \tau)$.  
Let $\cF$ be the $\sigma$-field defined by equation~\eqref{eqn:sigma-field}.  By conditioning on $\cF$ we have
\[\var(\hat \tau) = \E \left[\var\left[\hat \tau\big | \cF\right]\right] + \var \left[\E\left[\hat \tau \big | \cF \right]\right].\]
Now,
\[\var[\hat \tau | \cF] = \var\left[\sumin \frac{W_iY_i^{(1)}}{N_1} - \frac{(1 - W_i)Y_i^{(0)}}{N_0}\bigg| Y_i^{(0)}, Y_i^{(1)}\right]\]
is the usual variance of a difference-in-means estimator under SUTVA, i.e.\ fixed potential outcomes.  This is known to be~\citep[see for example][]{lin2013agnostic}
\[\limn n \E[\var[\hat \tau | \cF]] = \frac{1 - \pi}{\pi} \sigma_1^2 + \frac{\pi}{1 - \pi} \sigma_0^2 + 2\sigma_{01}.\]
For the second term, we have $\E[\hat \tau | \cF] = \bar Y_n^{(1)} - \bar Y_n^{(0)}$, so 
\[\limn n \var[\E[\hat \tau | \cF]] = \sigma_\tau^2\]
by Assumption~\ref{asm:variance-limits}.  This produces the variance expression~\eqref{eqn:asymptotic-variance}.

Since the variance term $\sigma^2$ of expression \eqref{eqn:wass-perturbative} in Corollary~\ref{cor:chatterjee} stabilizes, it is sufficient to show
\[\limn \bigg(\sum_{i,j} c_{i,j}\bigg)^{1/2} = 0 \qquad \text{and} \qquad
\limn \sumin \E|\Delta_i f_n|^3 = 0.\]
Since $|\Delta_i f_n^A| = O_p(n^{-1/2})$ by equation~\eqref{eqn:delta-A-bound} of Lemma~\ref{lem:delta-bounds}, there exists a constant $C$ such that
\[|\cov(\Delta_i f_n\Delta_i f_n^A, \Delta_j f_n \Delta_jf_n^B)| \leq \frac{C}{n} |\cov(\Delta_i f_n, \Delta_j f_n)|.\]
Then by Lemma~\ref{lem:deriv-bound}, there exist constants $c_{i,j} \geq 0$ such that
\[|\cov(\Delta_i f_n\Delta_i f_n^A, \Delta_j f_n \Delta_jf_n^B)| \leq c_{i,j}\]
and
\begin{align*}
\sum_{i,j} c_{i,j} \leq & \frac{C_1}{n^2} \sum_{i,j}|\cov(Y_i, Y_j)| + \frac{C_2}{n^2}\bigg[ \sumin \sum_{r \neq i} |\cov(\Delta_i Y_r, Y_i)| + \sumin \sum_{j \neq i} \sum_{r \neq i}|\cov(\Delta_i Y_r, Y_j)|\bigg]\\
& + \frac{C_3}{n^2} \bigg[\sum_{i,j} \sum_{\substack{r \neq i \\ r \neq j}} |\cov(\Delta_i Y_r, \Delta_j Y_r)| + \sum_{i,j} \sum_{r \neq i} \sum_{\substack{s \neq j \\ s \neq r}} |\cov(\Delta_i Y_r, \Delta_j Y_s)| \bigg].
\end{align*}
Each of the five terms in the bound captures a different relationship among the responses and discrete derivatives.  The first term measures a global covariance structure which tends to zero by Assumption~\ref{asm:approx-regularity}.  The third and fifth terms concern covariances among distinct actors, which are also negligible by Assumption~\ref{asm:approx-regularity}.  The second and fourth terms are the only ones that include elements measuring strong interference.  These two terms are handled by Lemmas~\ref{lem:useful1} and \ref{lem:useful2}, respectively.  So we conclude
\[\limn \bigg(\sum_{i,j} c_{i,j}\bigg)^{1/2} = 0.\]
Finally, by equation~\eqref{eqn:delta-bound} of Lemma~\ref{lem:delta-bounds}, $\E|\Delta_i f_n|^3 = O(n^{-3/2})$.  Hence
\[\sumin \E|\Delta_i f_n|^3 = O(n^{-1/2})\]
and so tends to zero.
\end{proof}

\pagebreak
\section{Proof of Proposition~\ref{prop:variance-estimation}}
\begin{proposition}
\label{prop:variance-estimation}
Under regularity conditions (Assumptions \ref{asm:design}-\ref{asm:variance-limits}) and restricted interference (either Assumption~\ref{asm:max-deg} or Assumptions~\ref{asm:approx-regularity}-\ref{asm:approx-local}), $\hat \sigma_\tau^2$ is consistent for $\sigma_\tau^2$. 
\end{proposition}
\begin{proof}
We wish to show that
\[\hat V_\tau^2 = \bar Y_1^2 + \bar Y_0^2 - 2\bar Y_1 \bar Y_0 - \hat \tau^2\]
is consistent for $\var(T) = \E[T^2] - \tau^2$.  It is already established that $\hat \tau \pto \tau$, so it suffices to show that $\var(\bar Y_1^2) \to 0$ (and $\var(\bar Y_0^2) \to 0$ is similar).

Now, the variance is decomposed as
\[\var(\bar Y_1^2) =  \E(\var (\bar Y_1^2 | \cF)) + \var(\E(\bar Y_1^2 | \cF))\]
where $\cF$ is the $\sigma$-field defined by equation \eqref{eqn:sigma-field} representing ``conditioning on SUTVA.''  Since $\bar Y_1^2$ is consistent under SUTVA, we have $\var(\bar Y_1^2 | \cF) \to 0$.  Hence the first term is zero.  For the second term, $\E(\bar Y_1^2 | \cF) = (\bar Y^{(1)})^2$, and so we require that $\var((\bar Y^{(1)})^2) \to 0$.  Notice that if $Y_i$ have maximal dependency degree $o(n^{k})$ then $Y_i^2$ have maximal dependency degree $o(n^{2k})$.  Therefore consistency for $(\bar Y^{(1)})^2$ follows from Proposition 2 of~\citet{savje2017average} whenever $k < 1/2$. (see also Assumption 2 of that paper).  Hence this is satisfied for the (approximate) dependency degree restrictions used in this paper, where $k = 1/4$ or $k = 1/3$.  

\end{proof}

\pagebreak
\section{Tables of simulation results}


\begin{table}[ht]
\centering
\begin{tabular}{cc|cc|c|ccc}
  \toprule
  \multicolumn{2}{c|}{Network} & \multicolumn{2}{c|}{Parameters} & \multicolumn{1}{c}{SW statistic} & \multicolumn{3}{|c}{SW $p$-values} \\
school & nodes & $\rhomax$ & $\gamma$ & average & avg & min & max \\ 
  \midrule
  Caltech &  762 &    2 & 0.5 & 0.996 & 0.355 & 0.038 & 0.669 \\ 
  Caltech &  762 &    2 & 0.9 & 0.996 & 0.373 & 0.061 & 0.688 \\ 
  Caltech &  762 &    2 & 0.99 & 0.997 & 0.484 & 0.034 & 0.961 \\ 
  Caltech &  762 &    6 & 0.5 & 0.997 & 0.438 & 0.081 & 0.827 \\ 
  Caltech &  762 &    6 & 0.9 & 0.997 & 0.569 & 0.016 & 0.943 \\ 
  Caltech &  762 &    6 & 0.99 & 0.998 & 0.688 & 0.285 & 0.915 \\ 
  Haverford & 1446 &    2 & 0.5 & 0.996 & 0.331 & 0.110 & 0.698 \\ 
  Haverford & 1446 &    2 & 0.9 & 0.997 & 0.586 & 0.014 & 0.958 \\ 
  Haverford & 1446 &    2 & 0.99 & 0.997 & 0.496 & 0.056 & 0.913 \\ 
  Haverford & 1446 &    6 & 0.5 & 0.996 & 0.406 & 0.010 & 0.904 \\ 
  Haverford & 1446 &    6 & 0.9 & 0.957 & 0.000 & 0.000 & 0.000 \\ 
  Haverford & 1446 &    6 & 0.99 & 0.928 & 0.000 & 0.000 & 0.000 \\ 
Amherst & 2235 &    2 & 0.5 & 0.996 & 0.309 & 0.027 & 0.937 \\ 
  Amherst & 2235 &    2 & 0.9 & 0.997 & 0.581 & 0.167 & 0.955 \\ 
  Amherst & 2235 &    2 & 0.99 & 0.996 & 0.455 & 0.014 & 0.951 \\ 
  Amherst & 2235 &    6 & 0.5 & 0.997 & 0.576 & 0.013 & 0.938 \\ 
  Amherst & 2235 &    6 & 0.9 & 0.991 & 0.011 & 0.000 & 0.066 \\ 
  Amherst & 2235 &    6 & 0.99 & 0.986 & 0.000 & 0.000 & 0.003 \\ 
  Michigan Tech & 3745 &    2 & 0.5 & 0.997 & 0.649 & 0.057 & 0.985 \\ 
  Michigan Tech & 3745 &    2 & 0.9 & 0.996 & 0.506 & 0.001 & 0.828 \\ 
  Michigan Tech & 3745 &    2 & 0.99 & 0.996 & 0.403 & 0.026 & 0.835 \\ 
  Michigan Tech & 3745 &    6 & 0.5 & 0.997 & 0.506 & 0.091 & 0.829 \\ 
  Michigan Tech & 3745 &    6 & 0.9 & 0.996 & 0.443 & 0.011 & 0.886 \\ 
  Michigan Tech & 3745 &    6 & 0.99 & 0.997 & 0.528 & 0.116 & 0.968 \\ 
  Wake Forest & 5366 &    2 & 0.5 & 0.996 & 0.497 & 0.008 & 0.853 \\ 
  Wake Forest & 5366 &    2 & 0.9 & 0.997 & 0.591 & 0.089 & 0.976 \\ 
  Wake Forest & 5366 &    2 & 0.99 & 0.996 & 0.372 & 0.015 & 0.876 \\ 
  Wake Forest & 5366 &    6 & 0.5 & 0.997 & 0.550 & 0.040 & 0.933 \\ 
  Wake Forest & 5366 &    6 & 0.9 & 0.979 & 0.000 & 0.000 & 0.002 \\ 
  Wake Forest & 5366 &    6 & 0.99 & 0.975 & 0.000 & 0.000 & 0.000 \\ 
   \bottomrule
\end{tabular}
\caption{Summary of Shapiro-Wilk $p$-values from Simulation 1.  Average, minimum, and maximum are taken over the 10 instances of the response.}
\label{table:simulation1}
\end{table}

\pagebreak
\begin{table}[H]
\small
\centering
\begin{tabular}{rr|rrr|rr}
  \toprule
  \multicolumn{2}{c|}{Parameters} & \multicolumn{3}{c}{Variances} & \multicolumn{2}{|c}{Ratios to SUTVA} \\
$\rhomax$ & $\gamma$ & SUTVA & expected & observed & expected & observed \\ 
  \midrule
   0 & 0.1 & 14.770 & 14.770 & 15.228 & 1.000 & 1.031 \\ 
     0 & 0.2 & 15.205 & 15.205 & 15.570 & 1.000 & 1.024 \\ 
     0 & 0.3 & 14.690 & 14.690 & 14.680 & 1.000 & 0.999 \\ 
     0 & 0.4 & 15.382 & 15.382 & 16.208 & 1.000 & 1.054 \\ 
     0 & 0.5 & 14.478 & 14.478 & 14.714 & 1.000 & 1.016 \\ 
     0 & 0.6 & 14.321 & 14.321 & 14.164 & 1.000 & 0.989 \\ 
     0 & 0.7 & 16.674 & 16.674 & 16.521 & 1.000 & 0.991 \\ 
     0 & 0.8 & 17.574 & 17.574 & 17.623 & 1.000 & 1.003 \\ 
     0 & 0.9 & 16.453 & 16.453 & 16.440 & 1.000 & 0.999 \\ 
     1 & 0.1 & 12.717 & 12.722 & 12.758 & 1.000 & 1.003 \\ 
     1 & 0.2 & 14.845 & 14.864 & 15.094 & 1.001 & 1.017 \\ 
     1 & 0.3 & 14.694 & 14.736 & 14.954 & 1.003 & 1.018 \\ 
     1 & 0.4 & 14.282 & 14.360 & 14.034 & 1.005 & 0.983 \\ 
     1 & 0.5 & 12.739 & 12.856 & 12.906 & 1.009 & 1.013 \\ 
     1 & 0.6 & 16.073 & 16.251 & 16.346 & 1.011 & 1.017 \\ 
     1 & 0.7 & 13.262 & 13.497 & 13.655 & 1.018 & 1.030 \\ 
     1 & 0.8 & 15.247 & 15.546 & 15.867 & 1.020 & 1.041 \\ 
     1 & 0.9 & 14.324 & 14.713 & 14.528 & 1.027 & 1.014 \\ 
     2 & 0.1 & 16.199 & 16.205 & 16.198 & 1.000 & 1.000 \\ 
     2 & 0.2 & 17.088 & 17.113 & 16.990 & 1.001 & 0.994 \\ 
     2 & 0.3 & 15.325 & 15.386 & 15.373 & 1.004 & 1.003 \\ 
     2 & 0.4 & 14.046 & 14.168 & 14.283 & 1.009 & 1.017 \\ 
     2 & 0.5 & 15.489 & 15.703 & 15.868 & 1.014 & 1.024 \\ 
     2 & 0.6 & 16.697 & 17.040 & 17.247 & 1.021 & 1.033 \\ 
     2 & 0.7 & 17.665 & 18.186 & 18.088 & 1.029 & 1.024 \\ 
     2 & 0.8 & 15.419 & 16.159 & 16.219 & 1.048 & 1.052 \\ 
     2 & 0.9 & 14.598 & 15.631 & 15.846 & 1.071 & 1.085 \\ 
     3 & 0.1 & 14.095 & 14.100 & 14.678 & 1.000 & 1.041 \\ 
     3 & 0.2 & 14.267 & 14.292 & 14.359 & 1.002 & 1.006 \\ 
     3 & 0.3 & 14.798 & 14.863 & 14.755 & 1.004 & 0.997 \\ 
     3 & 0.4 & 13.442 & 13.581 & 13.517 & 1.010 & 1.006 \\ 
     3 & 0.5 & 12.762 & 13.013 & 13.302 & 1.020 & 1.042 \\ 
     3 & 0.6 & 16.095 & 16.519 & 16.592 & 1.026 & 1.031 \\ 
     3 & 0.7 & 14.900 & 15.595 & 15.410 & 1.047 & 1.034 \\ 
     3 & 0.8 & 18.009 & 19.103 & 19.158 & 1.061 & 1.064 \\ 
     3 & 0.9 & 14.031 & 15.690 & 15.607 & 1.118 & 1.112 \\ 
     4 & 0.1 & 12.765 & 12.771 & 12.980 & 1.000 & 1.017 \\ 
     4 & 0.2 & 13.902 & 13.927 & 14.105 & 1.002 & 1.015 \\ 
     4 & 0.3 & 15.799 & 15.866 & 15.638 & 1.004 & 0.990 \\ 
     4 & 0.4 & 15.210 & 15.352 & 15.541 & 1.009 & 1.022 \\ 
     4 & 0.5 & 14.311 & 14.601 & 14.962 & 1.020 & 1.046 \\ 
     4 & 0.6 & 16.144 & 16.755 & 16.893 & 1.038 & 1.046 \\ 
     4 & 0.7 & 15.692 & 16.942 & 17.043 & 1.080 & 1.086 \\ 
     4 & 0.8 & 16.461 & 19.185 & 19.188 & 1.165 & 1.166 \\ 
     4 & 0.9 & 18.759 & 24.566 & 24.343 & 1.310 & 1.298 \\ 
     5 & 0.1 & 14.747 & 14.752 & 14.928 & 1.000 & 1.012 \\ 
     5 & 0.2 & 13.261 & 13.286 & 13.006 & 1.002 & 0.981 \\ 
     5 & 0.3 & 15.400 & 15.467 & 15.409 & 1.004 & 1.001 \\ 
     5 & 0.4 & 14.980 & 15.127 & 14.462 & 1.010 & 0.965 \\ 
     5 & 0.5 & 14.784 & 15.094 & 14.978 & 1.021 & 1.013 \\ 
     5 & 0.6 & 14.554 & 15.221 & 15.635 & 1.046 & 1.074 \\ 
     5 & 0.7 & 14.374 & 15.951 & 16.093 & 1.110 & 1.120 \\ 
     5 & 0.8 & 16.307 & 20.078 & 20.575 & 1.231 & 1.262 \\ 
     5 & 0.9 & 15.546 & 24.286 & 24.894 & 1.562 & 1.601 \\ 
   \bottomrule
\end{tabular}
\caption{Table of variances for the Caltech network from Simulation 2.}
\label{table:simulation2}
\end{table}

\end{document}